\documentclass[a4paper,11pt]{amsart}
\usepackage{amsmath,amsfonts,amsthm,amssymb,enumerate, comment}
\usepackage{graphicx}
%\graphicspath{ {./picture1/}} 

\numberwithin{equation}{section}
%\numberwithin{figure}{section}
% Theorem environment

\newtheorem{theorem}{Theorem}[section]

\newtheorem{proposition}[theorem]{Proposition}

\newtheorem{lemma}[theorem]{Lemma}

{
\theoremstyle{definition}
\newtheorem{definition}[theorem]{Definition}
\newtheorem{example}[theorem]{Example}
\newtheorem{remark}[theorem]{Remark}
}

% Definition with Roman Letters.
%\newenvironment{df}{\begin{definition}\rm}{\end{definition}}

%\newenvironment{ex}{\begin{example}\rm}{\end{example}}
 
\title{Transformations of lattice diagrams and their associated dotted diagrams}
\author{Inasa Nakamura}
\address{Department of Mathematics, Information Science and  Engineering, \newline
Saga University, \newline 
1 Honjomachi, Saga, 840-1153, Japan}
\email{inasa@cc.saga-u.ac.jp}
\subjclass[2020]{Primary: 05C10, Secondary: 57K10}
\keywords{graph; deformation; partial matching}

\pagestyle{plain}
\begin{document}  
\begin{abstract}
We consider a graph called a lattice diagram, which is a graph in the $xy$-plane such that each edge is parallel to the $x$-axis or the $y$-axis. In \cite{N}, we investigated transformations of certain lattice diagrams, and we considered the reduced diagram that is obtained from deformations of a diagram  associated with a lattice diagram. In this paper, we refine the notion of the reduced diagram by introducing the notion of a dotted diagram. A lattice diagram is presented by an admissible dotted diagram.
 We investigate deformations of dotted diagrams, and we investigate relation between deformations of admissible dotted diagrams and transformations of lattice diagrams, giving results that are refined and corrected versions of \cite[Lemma 6.2, Theorem 6.3]{N}. % with minimal area. 
\end{abstract}
\maketitle

\section{Introduction}\label{sec1}

Let $\mathbb{R}^2$ be the $xy$-plane. We say that a segment is in the {\it $x$-direction} or in the {\it $y$-direction} if it is parallel to the $x$-axis or the $y$-axis, respectively.

\begin{definition}
A {\it lattice diagram} is a finite graph $P$ in $\mathbb{R}^2$ consisting of a finite number of vertices of degree zero with multiplicity $2$, vertices of degree $2$ and $4$ with multiplicity $1$, and edges with multiplicity $1$, satisfying the following conditions. 

\begin{enumerate}[$(1)$]
\item
Each edge is either in the $x$-direction or in the $y$-direction. 

\item
The $x$-components (respectively $y$-components) of isolated vertices and edges in the $y$-direction (respectively $x$-direction)  are mutually distinct except for pairs of diagonal edges around a vertex of degree 4. 

\item
Around each vertex of degree $2$, the pair of edges have a coherent orientation. 

\item
Around each vertex of degree $4$, 
each pair of diagonal edges have a coherent orientation. 
\end{enumerate}

We call a vertex of degree zero (respectively, $4$) an {\it isolated vertex} (respectively, a {\it crossing}). 
We denote by $\partial P$ the oriented graph obtained from $P$ by deleting isolated vertices, called the {\it essential diagram} of $P$. 

The set of vertices are divided into two sets $X_0$ and $X_1$ 
such that each edge in the $x$-direction is oriented from a vertex of $X_0$ to a vertex of $X_1$, and isolated vertices are both in $X_0$ and $X_1$ with multiplicity 1. We denote $X_0$ (respectively $X_1$) by $\mathrm{Ver}_0(P)$ (respectively $\mathrm{Ver}_1(P)$), and we call it the set of {\it initial vertices} (respectively {\it terminal vertices}). 
When we draw figures, we denote an initial vertex (respectively a terminal vertex) by a small black disk (respectively an $X$ mark). And in our figures, the $x$-direction is the vertical direction, in order to be coherent with figures in \cite{N}. 
\end{definition}
\begin{figure}[ht]
\centering
\includegraphics*[height=4cm]{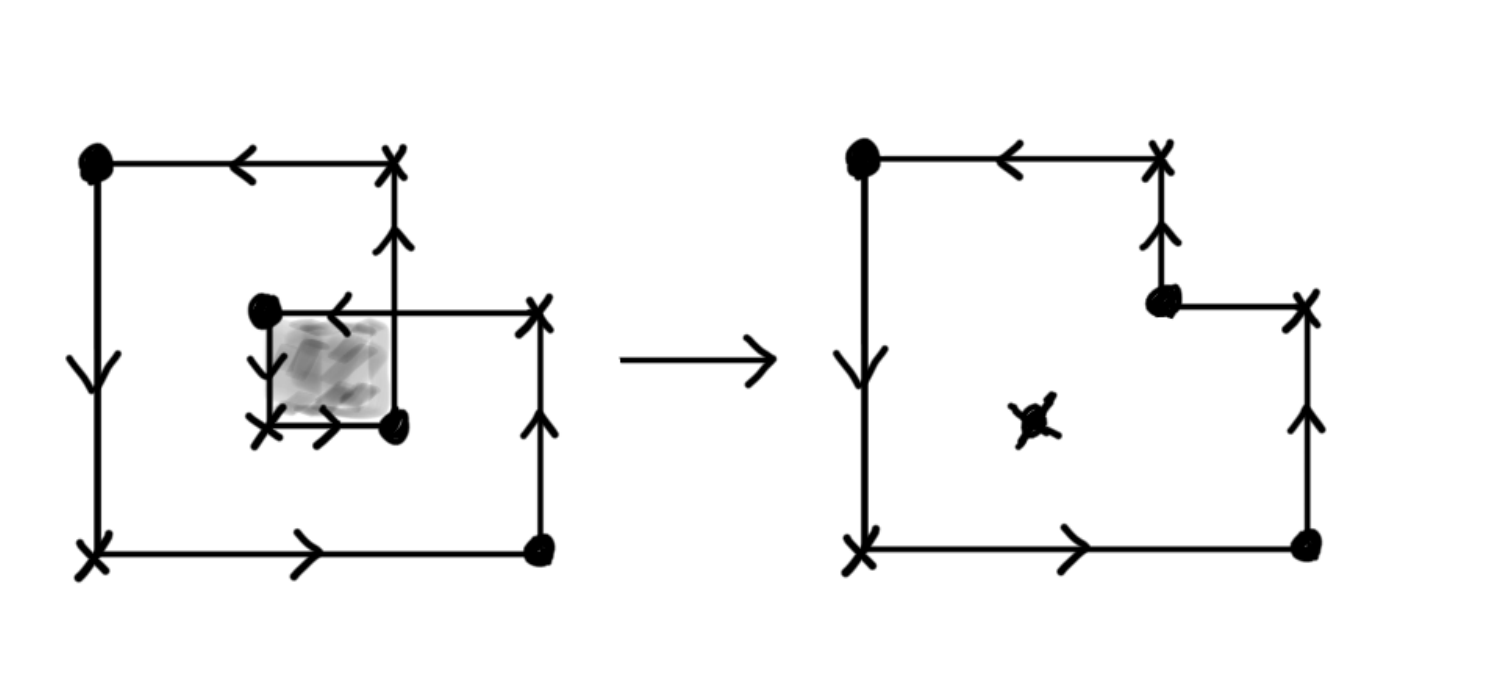}
\caption{A lattice diagram (left figure) and the result of a transformation along a rectangle (right figure). The rectangle is denoted by the shadowed area. We remark that the $x$-direction is the vertical direction, and we denote an initial vertex (respectively a terminal vertex) by a small black disk (respectively an $X$ mark). }
\label{Fig1}
\end{figure}

 See the left figure in Figure \ref{Fig1} for an example of a lattice diagram. 
In \cite{N}, we investigated partial matchings, using certain lattice diagrams (in \cite{N} called lattice polytopes) whose vertices of degree 2 are lattice points. 
A partial matching is a finite graph consisting of edges and vertices of degree zero or one, with the vertex set $\{1,2,\ldots,m\}$ for some positive integer $m$; see \cite{Reidys} for researches on structures of partial matchings using chord diagrams. 
In \cite{N}, with a motivation to investigate partial matchings, we introduced the lattice presentation of a partial matching, and the lattice diagram associated with a pair of partial matchings. Further, in \cite{N}, we treated transformations of lattice presentations and lattice diagrams, and the area of a transformation, and the reduced diagram of a lattice diagram. A deformation of a lattice diagram is a sequence of transformations of lattice diagrams such that each transformation is described by a rectangle as in Figure \ref{Fig1}. 

In this paper, we refine the notion of a reduced diagram by introducing the notion called a dotted diagram (Definition \ref{def2-1}). 
A lattice diagram is presented by an admissible dotted diagram.
We introduce a refined version of deformations of dotted diagrams, and 
we show that a reduced diagram of a dotted diagram obtained by a certain sequence of deformations is uniquely determined up to certain local moves (Theorem \ref{prop3-5}), and that for a dotted diagram, any sequence consisting of certain deformations is finite (Theorem \ref{r-prop3-5}). These theorems provide a refined and corrected version of \cite[Lemma 6.2]{N}. 
Further, we give a refined and corrected version of \cite[Theorem 6.3]{N}: 
for an admissible dotted diagram $\Gamma$, its associated lattice diagram $P$ admits a dissolution with minimal area if it is deformed to the empty graph by specific deformations, and if $P$ admits a transformation with minimal area, then $\Gamma$ is deformed to the empty graph by certain deformations (Theorem \ref{thm3-7}); see Remark \ref{rem0904}. Further, we show that if a dotted diagram has \lq\lq many'' dots, then it is admissible (Proposition \ref{prop3-8}), and a dotted diagram with ``many'' dots is reduced to the empty graph (Proposition \ref{prop3-9}).

The paper is organized as follows. In Section \ref{sec1}, we review transformations of lattice diagrams and the areas of transformations. 
In Section \ref{sec2}, we give the definitions of a dotted diagram and the deformations of dotted diagrams, and Theorems \ref{prop3-5} and \ref{r-prop3-5}. 
In section \ref{sec-proof}, we show Theorems \ref{prop3-5} and \ref{r-prop3-5}. 
In section \ref{deformation}, we investigate relation between deformations of admissible dotted diagrams and dissolutions of lattice diagrams, and we give Theorem \ref{thm3-7} and Propositions \ref{prop3-8} and \ref{prop3-9}. 
Section \ref{sec-lemma} is devoted to showing lemmas. 
We revise terminologies as shown in Table \ref{terminology}.

\begin{table}[h]
  \begin{center}
   
    \begin{tabular}{c|c}\hline\hline 
      In \cite{N}  & In this paper \\
      \hline
      lattice polytope $P$ & lattice diagram $P$ \\
      boundary $\partial P$ of $P$ & essential diagram $\partial P$ of $P$ \\
      transformation of $P$ along a rectangle   & transformation of $P$ along a rectangle\\
      transformation of $P$   & dissolution of $P$ \\
      dotted graph & dotted diagram \\
       reduced graph & reduced diagram \\
           \hline
    \end{tabular}
  \end{center}
   \caption[]{Terminologies}  
	\label{terminology}  
\end{table}

\section{Transformations of lattice diagrams and the areas of  transformations}\label{sec1}
In this section, we review transformations of lattice diagrams and the areas of transformations \cite{N}. 
\subsection{Transformations of lattice diagrams}
  
For a point $v=(x_1,y_1)$ of $\mathbb{R}^2$, we call $x_1$ (respectively $y_1$) the {\it $x$-component} (respectively the {\it $y$-component}) of $v$. 
For a set of points of $\mathbb{R}^2$, $\{ v_1, \ldots, v_n\}$  with $v_j=(x_j, y_j)$ ($j=1,\dots, n$), we call the set $\{ x_1, \ldots, x_n, y_1,\ldots, y_n\}$ the {\it set of $x$, $y$ components} of $\{ v_1,\ldots, v_n\}$. 

\begin{definition}
Let $\Delta$ be a set of points in $\mathbb{R}^2$. For a point $u$ of $\mathbb{R}^2$, we denote by $x(u)$ and $y(u)$ the $x$ and $y$-components of $u$, respectively.  For two distinct points $v, w$ of $\Delta$, 
let $R(v, w)$ be the rectangle one pair of whose diagonal vertices are $v$ and $w$. Put $\tilde{v}=(x(w), y(v))$ and $\tilde{w}=(x(v), y(w))$, which form the other pair of diagonal vertices of $R(v,w)$. Then, consider a new set of points obtained from $\Delta$, from removing $v,w$ and adding $\tilde{v}, \tilde{w}$. 
We call the new set of points the result of the {\it transformation of $\Delta$ by the rectangle} $R(v, w)$, denoted by $t( \Delta; R(v, w))$. 

For two sets of points $\Delta$ and $\Delta'$ in $\mathbb{R}^2$ with the same set of $x, y$-components, we define a {\it transformation from $\Delta$ to $\Delta'$}, denoted by $\Delta \to \Delta'$, as a sequence of transformations by rectangles such that the initial and the terminal points are $\Delta$ and $\Delta'$, respectively. 

For a lattice diagram $P$, we define a {\it dissolution of $P$} as a transformation from $\mathrm{Ver}_0(P)$ to $\mathrm{Ver}_1(P)$. We recall that $\mathrm{Ver}_0(P)$ and $\mathrm{Ver}_1(P)$ are the set of initial vertices and terminal vertices of $P$, respectively. 

For a lattice diagram $P'$, let $R$ be a rectangle that contains vertices of $\mathrm{Ver}_0(P')$ as a diagonal pair. 
We define the {\it transformation of $P'$ along the rectangle $R$} as 
the transformation from $P'$ to the lattice diagram whose initial vertices are $t(\mathrm{Ver}_0(P'); R)$ and terminal vertices are $\mathrm{Ver}_1(P')$, respectively. 
\end{definition}

A dissolution of a lattice diagram $P$ is described as a sequence of transformations of lattice diagrams along rectangles such that each rectangle contains initial vertices of each lattice diagram as a diagonal pair. See Figure \ref{Fig1} for an example of a transformation of a lattice diagram along a rectangle.

\subsection{Areas of transformations}\label{sec4-2}

For a lattice diagram $P$, we define the area of $P$ and the area of $|P|$, denoted by $\mathrm{Area}(P)$ and $\mathrm{Area}|P|$, respectively, as follows. 
Let $\mathcal{C}$ be a union of a finite number of immersed circles in $\mathbb{R}^2$. We call a connected component of the complement $\mathbb{R}^2 \backslash \mathcal{C}$ 
a {\it region} of $\mathcal{C}$. 
 
Let $f$ be a map from $S^1=\mathbb{R}/2\pi\mathbb{Z}$ to $\mathbb{R}/2\pi\mathbb{Z}$. Let $F: \mathbb{R} \to \mathbb{R}$ be a lift of $f$. The value $(F(x)-x)/2\pi$ does not depend on the choice of $F$ and $x$, which is called the {\it rotation number} of $f$. 
Let $C$ be an immersed oriented circle in $\mathbb{R}^2$. Let $A$ be a region of $C$.  We define the {\it rotation number of $C$ with respect to $A$} as the rotation number of the map $f: C=\mathbb{R}/2\pi \mathbb{Z} \to \mathbb{R}/2\pi\mathbb{Z}$ which maps $x \in C$ to the argument of the vector from a fixed interior point of $A$ to $x$, which is well-defined. 
For a union $\mathcal{C}$ of a finite number of immersed oriented circles in $\mathbb{R}^2$ and a region $A$ of $\mathcal{C}$, we define the {\it rotation number} of $\mathcal{C}$ with respect to $A$ as the sum of the rotation numbers of immersed oriented circles of $\mathcal{C}$, with respect to $A$.

Recall that for a lattice diagram $P$, 
we give an orientation of $\partial P$ by giving each edge in the $x$-direction (respectively in the $y$-direction) the orientation from a vertex of $\mathrm{Ver}_0(P)$ to a vertex of $\mathrm{Ver}_1(P)$ (respectively from a vertex of $\mathrm{Ver}_1(P)$ to a vertex of $\mathrm{Ver}_0(P)$). Then, by smoothing the angles, we regard $\partial P$ as an immersion of a union of several oriented circles equipped with vertices of degree 2. 

Let $A_1,\ldots, A_m$ be the regions of $\partial P$. For each region $A_i$, let $\omega(A_i)$ be the rotation number of $P$ with respect to $A_i$ ($i=1, \ldots, m$). 
Then we define $\mathrm{Area}_+(A_i)>0$ for a region $A_i$ by the area induced from $\mathrm{Area}_+(B)=|x_2-x_1||y_2-y_1|$ for a rectangle $B$ whose diagonal vertices are $(x_1, y_1)$ and  $(x_2, y_2)$. More precisely, we decompose $\mathrm{Cl}(A_i)$ as the union of rectangles $B_j$ such that $\mathrm{Cl}(A_i)=\cup_jB_j$ and $\mathrm{Int}(B_j) \cap \mathrm{Int}(B_k)=\emptyset$ for $j \neq k$, and we define $\mathrm{Area}_+(A_i):=\sum_j \mathrm{Area}_+(B_j)$. 
The area of $A_i$ does not depend on the choice of decomposition.  We remark that $B$ does not have the structure as a lattice diagram. 
Then, we define the {\it area} of $P$, denoted by $\mathrm{Area}(P)$, by $\mathrm{Area}(P)=\sum_{i=1}^m \omega(A_i)\mathrm{Area}_+(A_i)$, and the {\it area} of $|P|$, denoted by $\mathrm{Area}|P|$, by $\mathrm{Area}|P|=\sum_{i=1}^m |\omega(A_i)|\mathrm{Area}_+(A_i)$.

\begin{definition}

Let $P$ be a lattice diagram. We consider a dissolution of $P$,  
$\mathrm{Ver}_0(P)=\Delta_0 \to \Delta_1\to \cdots \to \Delta_k=\mathrm{Ver}_1(P)$, such that each $\Delta_{j-1} \to \Delta_j$ is a transformation by a rectangle $R_j$ 
($j=1,2,\ldots,k$). We regard each $R_j$ as a lattice diagram, whose initial vertices are those coming from $P$. 
Then, we call $\sum_{j=1}^k |\mathrm{Area}(R_j)|$ the {\it area of the dissolution of $P$}.

\end{definition}

Theorem 5.9 in \cite{N} implies the following. 

\begin{theorem}\label{thm3-10}
Let $P$ be a lattice diagram. We consider a dissolution of $P$,  
$\mathrm{Ver}_0(P)=\Delta_0 \to \Delta_1\to \cdots \to \Delta_k=\mathrm{Ver}_1(P)$, such that each $\Delta_{j-1} \to \Delta_j$ is a transformation by a rectangle $R_j$ 
($j=1,2,\ldots, k)$.  We regard each $R_j$ as a lattice diagram, whose initial vertices are those coming from $P$. 

Then
\begin{equation}\label{eq0}
\sum _{j=1}^k \mathrm{Area}(R_j) =\mathrm{Area}(P) 
\end{equation}
and 
\begin{equation}\label{eq2}
\sum_{j=1}^k |\mathrm{Area}(R_j)|\geq \mathrm{Area}|P|. 
\end{equation}

Further, when $P$ satisfies either the condition $(1)$ or $(2)$ in \cite[Theorem 5.9]{N}, there exist dissolutions which realize the equality of $(\ref{eq2})$.  

 \end{theorem}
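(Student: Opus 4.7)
The plan is to deduce all three claims directly from Theorem 5.9 of \cite{N}. The first step is to align the setting: a transformation $\mathrm{Ver}_0(P)=\Delta_0 \to \cdots \to \Delta_k = \mathrm{Ver}_1(P)$, presented as a sequence of rectangle transformations $R_1,\ldots,R_k$, is precisely the type of data treated in \cite[Theorem 5.9]{N}. I would therefore verify that the quantities $\mathrm{Area}(P)$ and $\mathrm{Area}|P|$, defined here via the rotation numbers $\omega(A_i)$ on the regions $A_i$ of $\mathbb{R}^2 \setminus \partial P$, agree with the corresponding quantities used in \cite{N}.

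For (\ref{eq0}) I would argue by induction on $k$. The base case $k=0$ is immediate, since then $\mathrm{Ver}_0(P) = \mathrm{Ver}_1(P)$, so $P$ reduces to isolated vertices and both sides vanish. For the inductive step, after the first rectangle transformation by $R_1$, the lattice polytope $P'$ with initial vertex set $\Delta_1$ and terminal vertex set $\mathrm{Ver}_1(P)$ should satisfy $\mathrm{Area}(P) = \mathrm{Area}(R_1) + \mathrm{Area}(P')$. The key geometric observation is that replacing the diagonal pair $\{v,w\}$ of $\Delta_0$ by $\{\tilde v,\tilde w\}$ changes the rotation number of $\partial P$ on the interior of $R_1$ by exactly $\pm 1$, with the sign dictated by the coherent orientation of $\partial P$, while leaving the rotation numbers on the complement of $R_1$ unchanged. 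The inductive hypothesis applied to $P'$ then closes the telescoping argument.

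For (\ref{eq2}) I would argue pointwise: for each $p \in \mathbb{R}^2 \setminus \partial P$, the signed count of rectangles $R_j$ that contain $p$ (with the signs introduced above) equals $\omega(A_i)$ on the region $A_i$ containing $p$. Applying the triangle inequality and integrating regionwise gives
\[\sum_{j=1}^k |\mathrm{Area}(R_j)| \ \geq \ \sum_{i=1}^m |\omega(A_i)|\,\mathrm{Area}(A_i) \ = \ \mathrm{Area}|P|.\]
The equality clause transfers directly from \cite[Theorem 5.9]{N}, whose conditions $(1)$ and $(2)$ supply explicit transformations attaining the lower bound.

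The main obstacle will be the sign bookkeeping in the inductive step for (\ref{eq0}): one must verify that the convention used to orient $\partial P$ is compatible with the sign of $\mathrm{Area}(R_j)$ across all four configurations of the diagonal pair, and that the updated vertex set $\Delta_1$ genuinely determines a valid lattice polytope $P'$ (so that the inductive hypothesis is applicable). Beyond this case analysis, the argument is essentially a transcription of the cited result from \cite{N}, and the remaining work is largely notational.
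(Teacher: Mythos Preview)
Your approach matches the paper's: the paper does not give an independent proof of this theorem at all, but simply states that it is implied by \cite[Theorem~5.9]{N}. Your additional sketches (the induction on $k$ for \eqref{eq0} and the pointwise triangle-inequality argument for \eqref{eq2}) go beyond what the paper records, but they are consistent with the cited result and do not constitute a different route.
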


We call a dissolution which realizes the equality of $(\ref{eq2})$ a dissolution {\it with minimal area}.  

\section{Dotted diagrams and their reduced diagrams}\label{sec2}
In this section, we introduce the notion of a reduced diagram of a lattice diagram. And we give theorems that a reduced diagram of a dotted diagram obtained by a certain sequence of deformations is uniquely determined up to certain local moves (Theorem \ref{prop3-5}), and that for a dotted diagram, any sequence consisting of certain deformations is finite (Theorem \ref{r-prop3-5}).

\begin{remark}\label{rem0904}

Theorems \ref{prop3-5} and \ref{r-prop3-5} provide a refined and corrected version of \cite[Lemma 6.2]{N}; we remark that the set of deformations given in \cite{N} does not include the cases that have \lq\lq overlapping layers'', and for deformations that delete circle/loop components, the condition for the label of the complement region is necessary to show the uniqueness; 
the original version of \cite[Lemma 6.2]{N} holds true when we add the condition of labels for deformations that delete circle/loop components, and any sequence of deformations satisfies the condition (A) given in Theorem \ref{prop3-5}. 

Theorem \ref{thm3-7} is a refined and corrected version of \cite[Theorem 6.3]{N}; we remark that \cite[Theorem 6.3]{N} does not hold for lattice diagrams that admit transformations other than those described by the given deformations. 
\end{remark}

\subsection{Dotted diagrams}

\begin{definition}\label{def2-1}
Let $\Gamma$ be a finite graph in $\mathbb{R}^2$ consisting of vertices of multiplicity $1$ and edges of multiplicity $1$ which are intervals smoothly embedded in $\mathbb{R}^2$. Then $\Gamma$ is a {\it dotted diagram} if each edge has an orientation and each vertex is of degree 2 or degree 4, satisfying the following conditions. 
\begin{enumerate}
\item
Around each vertex of degree 2, the edges has a coherent orientation. Further, the closure of the union of the edges is an interval smoothly embedded in $\mathbb{R}^2$. We denote the vertex by a small black disk, called a {\it dot}.  

\item
Around each vertex of degree 4, each pair of diagonal edges has a coherent orientation. Further, the closure of the union of the 4 edges is the union of a pair of smoothly embedded intervals in $\mathbb{R}^2$ with one transverse intersection point. We call the vertex a {\it crossing}.

\end{enumerate}

We regard edges connected by vertices of degree 2 as an edge  equipped with several dots. We call an edge or a part of an edge an {\it arc} (with/without dots). The complement of a dotted diagram $\Gamma \subset \mathbb{R}^2$ consists of a finite number of connected components. We call each component a {\it region}. We regard $\Gamma$ as a finite number of immersed oriented circles with transverse intersection points, and we assign each region with an integral label denoting the rotation number. 
We also call $\Gamma$ equipped with integral labels on regions a {\it dotted diagram}. 

Two dotted diagrams are the {\it same} if they are related by an ambient isotopy of $\mathbb{R}^2$. 

\end{definition}

Let $P$ be a lattice diagram. 
We consider the graph $\Gamma$ obtained from the essential diagram $\partial P$ of $P$ by changing the terminal vertices $\mathrm{Ver}_1(P)$ into inner points of edges: $\Gamma$ is in the form of $\partial P$, equipped with the initial vertices $\mathrm{Ver}_0(P)$, which are one of the two types of vertices of $P$: $\mathrm{Ver}_0(P)$ and $\mathrm{Ver}_1(P)$. We smooth the angles of $\Gamma$. 
Then $\Gamma$ is a dotted diagram, which will be called the {\it dotted diagram associated with a lattice diagram $P$}; see Figure \ref{Fig2}.
We say that a dotted diagram $\Gamma$ is {\it admissible} if there exists a lattice diagram $P$ with which $\Gamma$ is associated. 

\begin{figure}[ht]
\centering
\includegraphics*[height=4cm]{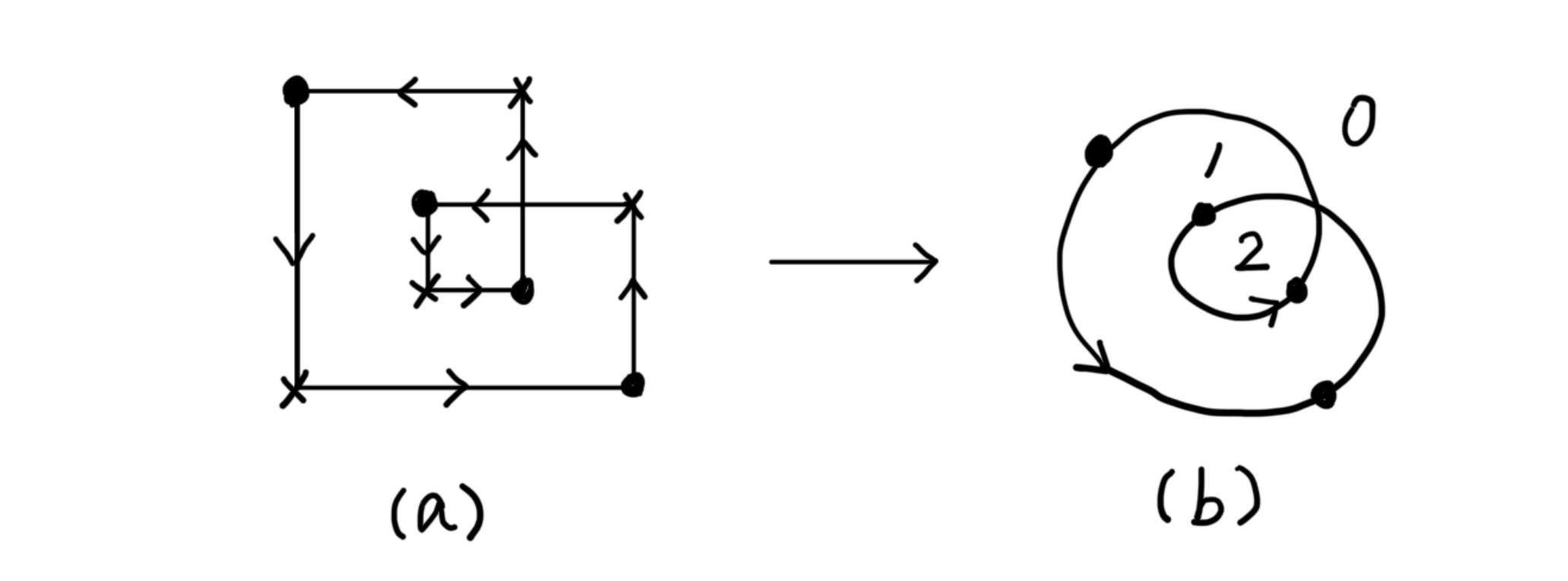}
\caption{(a) A lattice diagram and (b) the associated dotted diagram.}
\label{Fig2}
\end{figure}

Let $\Gamma$ be a dotted diagram. 
We call a set of arcs with dots and connecting crossings a {\it circle component} if it is the boundary of a smoothly embedded disk and any pair of arcs connected by a crossing is a pair of diagonal arcs. And we call a set of arcs with dots and connecting crossings a {\it loop component} if it is the boundary of an embedded disk $D$ with one non-smooth point and one pair of arcs connected by a crossing $c$ is a pair of adjacent arcs and 
any other pair of arcs connected by a crossing is a pair of diagonal arcs, and moreover around the crossing $c$ connecting the adjacent arcs forming the component, the disk $D$ does not contain the other two arcs. 

\subsection{Layer decomposition of a dotted diagram}

Let $\Gamma$ be a dotted diagram. 
Let $X$ be the closure of the union of a finite number of regions of $\Gamma$, where we ignore labels of the regions, equipped with dots on the boundary $\partial X$,  satisfying the followings. 
\begin{enumerate}
\item
The subset $X \subset \mathbb{R}^2$ is homeomorphic to an embedded disk. 
\item
The orientations of the edges of $\Gamma \cap \partial X$ induce a coherent orientation as a circle to the boundary $\partial X$. 
\item
The boundary $\partial X$ is equipped with dots induced from $\Gamma$. 
\end{enumerate}
We assign $X$ the sign $+1$ (respectively $-1$) if the induced orientation of $\partial X$ is anti-clockwise (respectively clockwise). 
We call $X$ a {\it layer}. In particular, we call $X$ 
a {\it positive layer} (respectively a {\it negative layer}) if the sign is $+1$ (respectively $-1$).

 When a dotted diagram $\Gamma$ is presented as the union $\cup_{j=1}^m \partial X_j$ of a finite number of layers $X_1, \ldots, X_m$ such that $\partial X_i \cap \partial X_j$ ($i \neq j$, $i,j \in \{1, \ldots, m\}$) consists of a finite number of points, we call 
the set $\{X_1, \ldots, X_m\}$ a {\it layer decomposition} of $\Gamma$. 
For a region $R$ of $\Gamma$, we call a layer $X$ a layer {\it forming $R$}  if $X \cap R \neq \emptyset$. 
For a region $R$ of $\Gamma$, its label is the sum of the signs of the layers forming $R$. 

\begin{example}
We consider a dotted diagram $\Gamma$ as illustrated in the top figure in Figure \ref{20240930-2}. The dotted diagram $\Gamma$ consists of two embedded circles, denoted by $C_1$ and $C_2$ which transversely intersect each other at two points. We denote by $D_1$ and $D_2$ the embedded disks whose boundaries are $C_1$ and $C_2$, respectively. Then, $\Gamma$ has two layer decompositions. One layer decomposition consists of two positive layers $D_1$ and $D_2$, both with the sign $+1$, and the other layer decomposition consists of two positive layers $D_1 \cup D_2$ and $D_1 \cap D_2$, both with the sign $+1$. 
\end{example}

\begin{figure}[ht]
\centering
\includegraphics*[height=6cm]{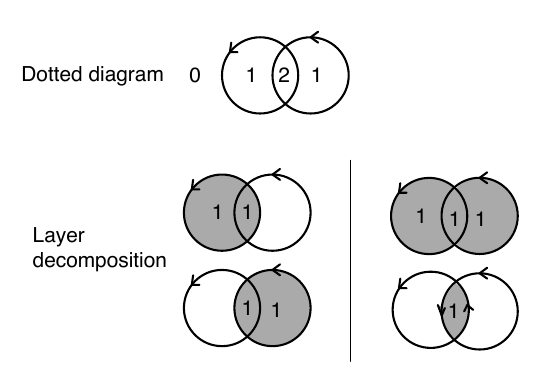}
\caption{Example of a dotted diagram and its layer decompositions, where each layer is the shadowed embedded disk.  We have two choices of layer decompositions.}\label{20240930-2}
\end{figure}

\begin{figure}[ht]
\centering
\includegraphics*[height=3cm]{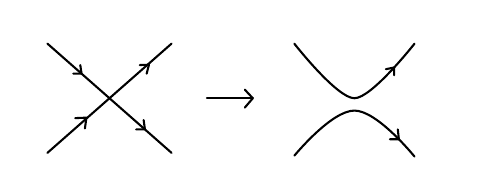}
\caption{Smoothing a crossing. }\label{20240930-1}
\end{figure}

\begin{proposition}
Any dotted diagram $\Gamma$ has a layer decomposition. 

\end{proposition}

\begin{proof}
Let $\Gamma_0$ be the dotted diagram obtained from $\Gamma$ by smoothing each crossing as depicted in Figure \ref{20240930-1}. The new dotted diagram consists of a finite number of embedded circles. 
For each embedded circle $C$, we assign to the embedded disk whose boundary is $C$ the sign $+1$ (respectively $-1$) if $C$ is oriented anti-clockwise (respectively, clockwise). Then, each circle $C$ induces a layer of $\Gamma$, such that the boundary of each layer, by modifying slightly at crossings, is $C$. 
By this method, $\Gamma_0$ induces a layer decomposition of $\Gamma$. 
\end{proof}

\subsection{Background and overlapping layers of $\Gamma \cap D$}

Let $\Gamma\neq \emptyset$ be a dotted diagram. 
We denote by $V_*$ the set of dots and crossings of $\Gamma$. 
Let $D$ be an embedded disk in $\mathbb{R}^2$ such that $\Gamma \cap D\neq \emptyset$ and $\Gamma \cap \partial D$ consists of a finite number of transverse intersection points of $\Gamma \backslash V_*$ and $\partial D$.

Assume that there exist dotted diagrams $G$ and $G'$ in $\mathbb{R}^2$ and layers $X_1, \ldots, X_m$ ($X_i \neq X_j$ for $i\neq j$) of $G'$  such that $\{X_1, \ldots, X_m\}$ is a subset of a layer decomposition of $G'$, satisfying the following. Let $V$ (respectively $V'$) be the set of dots and crossings of $G$ (respectively $G'$). Then, $G \cap G'$ consists of a finite number of transverse intersection points of $G\backslash V$ and $G' \backslash V'$, and $X_i \cap D\neq \emptyset$ for each $i=1, \ldots, m$, and $\Gamma \cap D$ is the closure of the union of $G \cap D$ and $\partial X_1\cap \mathrm{Int}(D), \ldots, \partial X_m\cap \mathrm{Int}(D)$. Then we call $G \cap D$ 
a {\it background} of $\Gamma \cap D$. 
We call  $X_1\cap D, \ldots, X_m\cap D$ {\it overlapping layers} of $\Gamma \cap D$, or layers {\it overlapping} the background $G \cap D$. 
For a region $R$ of $G$ that intersects with $D$, we call the non-empty intersection $R \cap D$ a {\it block} of $G$; the closure of a block is the closure of the intersection with $D$ of the union of several regions of $\Gamma$. 
 
The label of a region $R$ of $\Gamma$ is the sum of the label of the block of the background containing $R$ and the labels of the overlapping layers containing $R$. 

\begin{example}
We consider a dotted diagram $\Gamma$ and an embedded disk $D$ as illustrated in the top figure in Figure \ref{20240930-3}. Let $G$ and $G'$ be dotted diagrams as shown in the lower figures in Figure \ref{20240930-3}. The dotted diagram $G'$ has a layer decomposition $\{ X\}$ with $G'=\partial X$. We see that $\Gamma \cap D$ is the closure of $(G \cap D) \cup (\partial X \cap \mathrm{Int}(D))$. Hence $G \cap D$ is a background and $X \cap D$ is an overlapping layer of $\Gamma \cap D$. 
\end{example}

\begin{figure}[ht]
\centering
\includegraphics*[height=7cm]{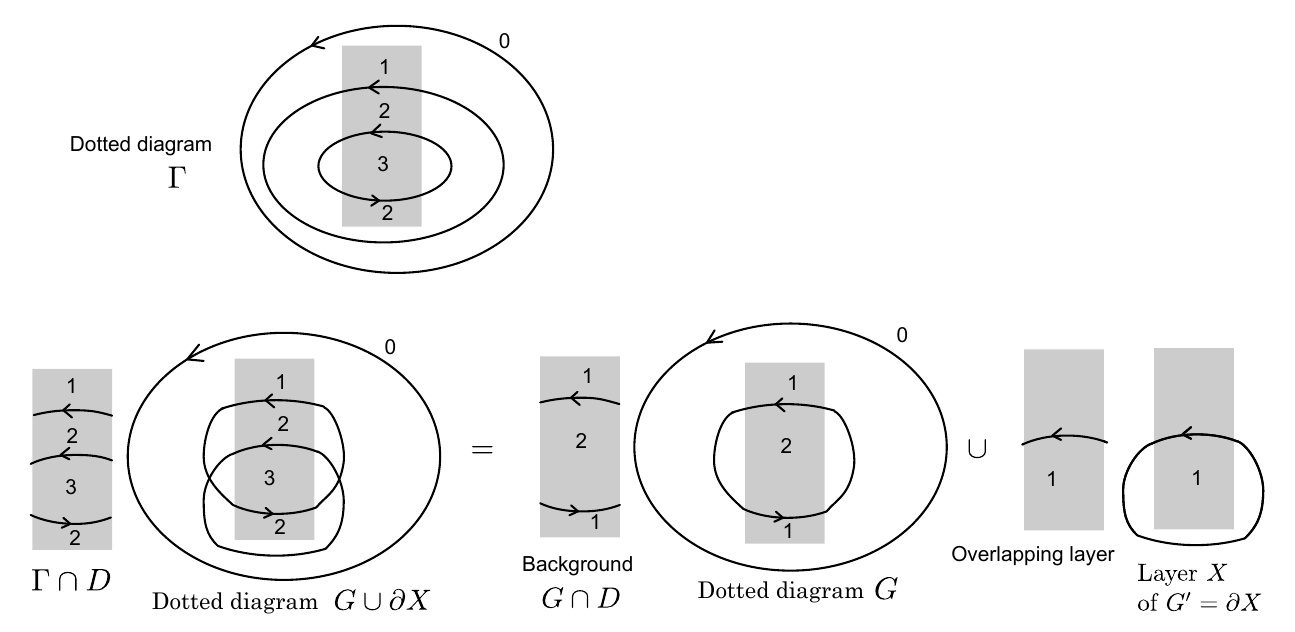}
\caption{Example of a dotted diagram $\Gamma$ and its intersection with a disk $D$. By taking a dotted diagram $G$ such that the intersection with $D$ of $G$ and its overlapping layer is $\Gamma \cap D$, we decompose $\Gamma \cap D$ into a background and an overlapping layer.  }
\label{20240930-3}
\end{figure}

\begin{definition}\label{def3-2}
Let $\Gamma$ and $\Gamma'$ be dotted diagrams. 
Let $V$ (respectively $V'$) be the set of dots and crossings of $\Gamma$ (respectively $\Gamma'$), respectively. 
Then, 
$\Gamma'$ is obtained from $\Gamma$ by 
a {\it dotted diagram deformation} or simply a  {\it deformation} if $\Gamma$ is changed to $\Gamma'$ by a local move in an embedded disk $D\subset \mathbb{R}^2$, satisfying the following. \\

\noindent
(1)\ 
The intersection $\Gamma \cap \partial D$ (respectively $\Gamma' \cap \partial D$) consists of transverse intersection points of $\Gamma\backslash V$ (respectively $\Gamma' \backslash V'$) and $\partial D$. \\

\noindent
(2)\ 
The dotted diagrams are identical in the complement of $D$: 
$\Gamma \cap (\mathbb{R}^2\backslash D)=\Gamma' \cap (\mathbb{R}^2\backslash D)$. \\

\noindent
(3)\ 
The intersection $\Gamma \cap D$, is deformed to $\Gamma' \cap D$, by one of the following.

 \begin{enumerate}[(I)]
 \item
Reduce several dots on an arc to one dot on the arc; see Figure \ref{Fig3} (I).\\

For (II) (III) (IV), we consider the situation that $\Gamma \cap D$ consists of a background $G\cap D$ with overlapping layers such that overlapping layers have the same sign $\epsilon$ ($\epsilon \in \{+1, -1\}$). We apply deformations II, III, IV when $\Gamma \cap D$ satisfies the assumption as follows. We remark that $\Gamma\cap D$ may have choices of the background $G\cap D$. We call the deformations {\it deformations II, III, IV with respect to the background $G\cap D$}.

 \item
 
Assume that the background $G \cap D$ of $\Gamma \cap D$ is as in the left figure of Figure \ref{Fig3} (II): 
we have a circle component $C$ such that 
when we consider only the background $G\cap D$, $C$ is the boundary of a block $R$ of $G$ with nonzero label $\epsilon i$, where $\epsilon$ is the sign of overlapping layers, and 
 $i$ is a positive integer. Further, assume that in the neighborhood of $R$, the complement of $R$, that is a block of $G$, has the  label $\epsilon (i-1)$. Here, we do not give a condition concerning the number of dots in the circle component: it can have dots, and we also include the case it has no dots; see Figure \ref{Fig3} (II). 
 
Then, remove the circle component $C$. 

\item
Assume that the background $G\cap D$ of $\Gamma \cap D$ is as in the left figure of Figure \ref{Fig3} (III): we have 
a loop component $C$ which is the boundary of an embedded disk $E$ with one non-smooth point such that $E$ is a block of $G \cap D$ with a nonzero label  $\epsilon i$, where $\epsilon$ is the sign of overlapping layers, and $i$ is a positive integer. Further, we assume that in the neighborhood of the disk $E$, the block of $G\cap D$ in the complement whose closure's boundary contains $C$ has the label $\epsilon (i-1)$. 

The background $G \cap D$ is the union of $C$ and a pair of adjacent arcs $a_1$ and $a_2$ of $G$ with/without dots. 
Let $G'\cap D$ be the dotted diagram obtained from $(G\cap D) \backslash C$ by taking the closure and smoothing the non-smooth point; further, if $C$ has dots, then we add a dot at the joining point of $a_1$ and $a_2$. 
Then, we deform $G\cap D$ to $G' \cap D$ with overlapping layers unchanged; see Figure \ref{Fig3} (III). 

\item
A local move such that the background $G\cap D$ is as illustrated in Figure \ref{Fig3} (IV), where we require that there is a dot on each arc, $\epsilon$ is the sign of overlapping layers, $i$ is a positive integer, and the arcs admit induced orientations. The overlapping layers are unchanged. Further we require that the resulting graph is a dotted diagram. 

We call the part of the block of $G \cap D$ before the deformation with the label $\epsilon i$ the {\it middle block}. 
Let $p_1$ and $p_2$ (respectively $a_1$ and $a_2$) the dots (respectively the arcs) of $G\cap D$. We call $p_1$ and $p_2$ (respectively $a_1$ and $a_2$) the {\it involved dots} (respectively {\it involved arcs}). We call a deformation IV a deformation IV {\it between $p_1$ and $p_2$} or {\it between $a_1$ and $a_2$}. 
We remark that a deformation IV is not uniquely determined. 
\end{enumerate}

In order to determine deformations IV with respect to a fixed background, we consider local moves $\mathcal{E}$, as follows. 
We say that $\Gamma'$ is obtained from $\Gamma$ by 
a {\it local move} or a {\it deformation} $\mathcal{E}$ if $\Gamma$ is changed to $\Gamma'$ by a local move in an embedded disk $D\subset \mathbb{R}^2$, satisfying the following. 
We have the situation (1) and (2), and the assumption of deformations II, III, IV,  and the background $G \cap D$ consists of an arc such that the blocks have the labels $\epsilon (i-1)$ and $\epsilon i$ for some $i>0$ as in the left figure of Figure \ref{Fig3} (I), 
where we do not give a condition for the number of dots. 
Then, we define a local move $\mathcal{E}$ by the move that deforms $G \cap D$ to another background $G'\cap D$ by moving the arc 
 by an ambient isotopy of $D$, under the condition that the result does not create loop components. The overlapping layers are unchanged, and we require that the resulting graph is a dotted diagram. 

\begin{enumerate}

\item[($\mathcal{E}$)] 
In the above situation, move an arc with/without dots of the background by an ambient isotopy of $D$, with overlapping layers unchanged, under the condition that the result does not create loop components. 
In particular, move a dot in the arc ignoring overlapping layers. Further we require that the resulting graph is a dotted diagram. 
\end{enumerate}

\end{definition}

\begin{figure}[ht]
\centering
\includegraphics*[height=5cm]{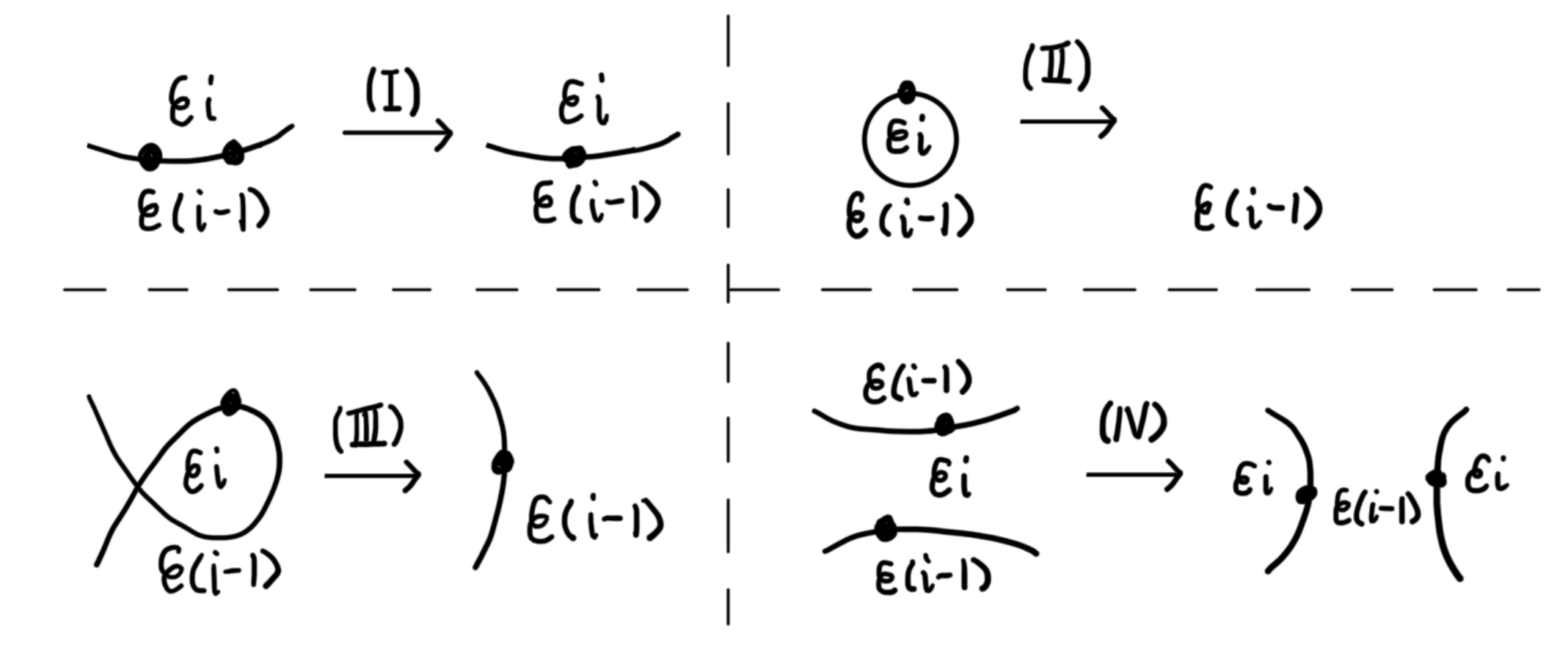}
\caption{Local deformations I--IV, where the figures illustrate backgrounds for deformations II, III, IV. Here, $\epsilon \in \{+1, -1\}$ and $i$ is a positive integer, and we omit the orientations of the arcs and some of the labels of the regions/blocks. A deformation IV is  applicable when the arcs admit induced orientations. 
Deformations II, III, IV can be applied including the case when the blocks are overlapped by several layers such that each overlapping layer has the label $\epsilon$.}  
\label{Fig3}
\end{figure}

By definition, we have the following lemma. 
\begin{lemma}
Deformation (IV) is well-defined up to local moves 
$\mathcal{E}$. 
\end{lemma}

\begin{remark}
Deformations I--IV are associated with transformations of a lattice diagram along a rectangle as in Figure \ref{B-fig17}. 
\end{remark}

\begin{figure}[ht]
\centering
\includegraphics*[height=4cm]{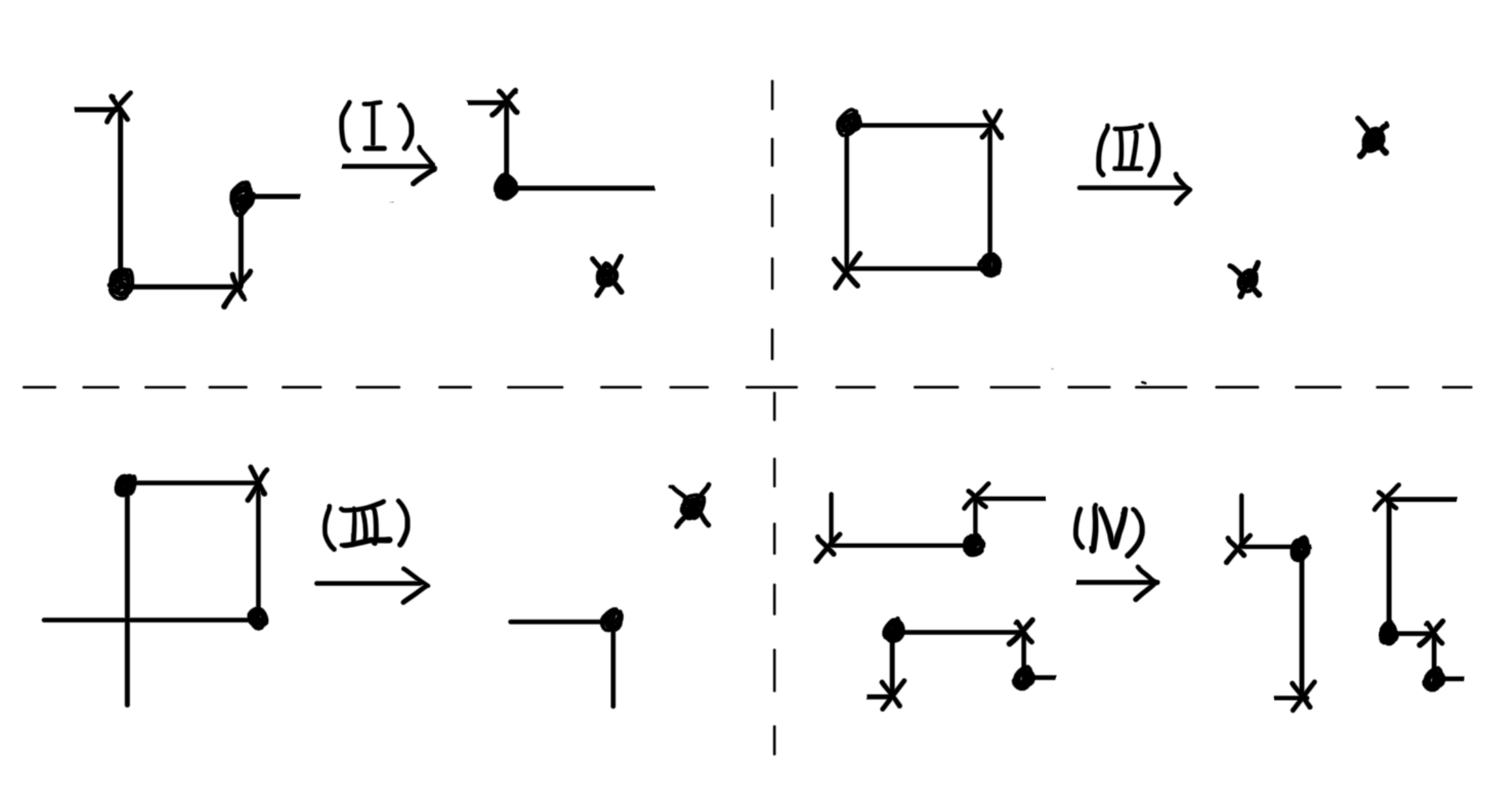}
\caption{Transformations of a lattice diagram along a rectangle corresponding to deformations I--IV, where we omit orientations of edges and labels of regions.}
\label{B-fig17}
\end{figure}

\begin{definition}\label{def3-3}
Let $\Gamma$ be a dotted diagram. We say that $\Gamma$ is  {\it reducible} if we can apply one of deformations I--IV to $\Gamma$. 
And we call a dotted diagram $\Gamma'$ a {\it reduced diagram} of 
$\Gamma$ if $\Gamma'$ is not reducible and $\Gamma'$ is obtained from $\Gamma$ by a finite sequence of deformations I--IV.

Further, we consider a specific deformation IV, called a {\it deformation IVa}, that satisfies one of the following conditions (a1) and  (a2). We denote by $\mathcal{R}$ a deformation IV. 

\begin{enumerate}

\item[(a1)]
The arcs involved in $\mathcal{R}$ are adjacent arcs of a crossing, where we ignore overlapping layers, such that $\mathcal{R}$ creates a loop component applicable of a deformation III. 

\item[(a2)]
The deformation $\mathcal{R}$ creates a circle component $C$ from two concentric circle components such that a deformation II is applicable to $C$. 
\end{enumerate}

We call deformations I, II, III and IVa {\it good deformations}. 
And we say that a sequence of deformations I, II, III, IVa is {\it in  good order} if it satisfies the following rule: 
\begin{enumerate}
\item[(1)]
After a deformation IVa satisfying (a1), next comes the deformation III that deletes the created loop component. 

\item[(2)]
After a deformation IVa satisfying (a2), next comes the deformation II that deletes the created circle component. 
\end{enumerate}

And we call a dotted diagram $\Gamma'$ a {\it good reduced diagram} of 
$\Gamma$ if $\Gamma'$ is not applicable of good deformations I, II, III, IVa, and $\Gamma'$ is obtained from $\Gamma$ by a finite sequence of good deformations I, II, III, IVa in good order.

\end{definition}

See \cite[Figure 6.2]{N} for an example of a non-empty reduced diagram. 

\subsection{Cores of deformations IV}

A deformation IV can be regarded as the result of a band surgery along an untwisted band \cite{Kawauchi}, as follows. For a dotted diagram $\Gamma$, we denote by $V$ the set of crossings and dots of $\Gamma$. Let $B \subset \mathbb{R}^2$ be an embedded disk such that $\Gamma \cap \partial B$ consists of a pair of arcs, denoted by $(a_1, a_2)$,  and a finite number of transverse intersection points such that the boundary points of $a_1$ and $a_2$ and the intersection points are in $\Gamma \backslash V$. We call $B$ an {\it untwisted band} or a {\it band} attaching to $\Gamma$. Assume that the pair of arcs $(a_1, a_2)$ in $\Gamma \cap \partial B$ does not contain dots. Then, the result of {\it band surgery} along $B$ is the dotted diagram obtained from 
\[
(\Gamma \cup \partial B)\backslash (a_1 \cup a_2). 
\]
by taking the closure. 

Let $\Gamma$ be a dotted diagram to which we apply a deformation IV, and let $\Gamma'$ be the resulting dotted diagram. 
We denote by $G\cap D$ (respectively $G' \cap D$) the background associated with $\Gamma$ (respectively $\Gamma'$) with respect to the deformation IV.  Let $B$ be the embedded disk that is the intersection of the block of $G \cap D$ with the label $\epsilon i$ and the block of $G' \cap D$ with the label $\epsilon (i-1)$. We assume that $B$ does not contain the pair of dots. Then, $B$ is a band attaching to $\Gamma'$, and the deformation IV is the result of the band surgery along $B$. The {\it core} of the band $B$ is a simple arc connecting points of arcs of $\Gamma'$ that is a retraction of $B$. 
By the condition for the signs of the overlapping layers, the core is determined up to local moves $\mathcal{E}$. 
Let $p_1$ and $p_2$ be the dots associated with the deformation IV.  We often denote the core of the band $B$ by a simple arc in the middle block  of the background connecting $p_1$ and $p_2$. When the middle block of the background has overlapping layers, we have choices of the band and the core, but the result of the deformation IV is determined up to local moves $\mathcal{E}$.

For a dotted diagram $\Gamma$, we consider the following condition. 
Let $p_1$ and $p_2$ be a pair of dots of $\Gamma$, between which a deformations IV is applicable. 
We say that $\Gamma$ {\it satisfies the condition $\mathrm{(A)}$ with respect to $(p_1, p_2)$} if it satisfies the following condition: 
\begin{enumerate}
\item[(A)]
Let $\rho$ and $\rho'$ be cores of bands connecting $p_1$ and $p_2$, 
and let $\Gamma'$ (respectively $\Gamma''$) be the dotted diagram obtained from $\Gamma$ by the deformation IV associated with $\rho$ (respectively $\rho'$). 
Then, $\Gamma'$ and $\Gamma''$ are related by local moves $\mathcal{E}$, for all possible cores $\rho$ and $\rho'$. 
\end{enumerate}

And we say that $\Gamma$ {\it satisfies the condition} (A) if it satisfies the condition (A) with respect to all pairs of dots. 
Further, we say that a sequence of deformations {\it satisfies the condition} (A) if any appearing dotted diagram satisfies the condition (A). 

We remark in an imprecise expression that if the \lq\lq middle block'' contains connected components that cannot be regarded as the subdiagram coming from overlapping layers, then the dotted diagram does not satisfy the condition (A).  
 
\begin{theorem}\label{prop3-5}
Let $\Gamma$ be a dotted diagram, and let $\Gamma'$ be a reduced diagram of $\Gamma$ whose associated sequence of deformations  satisfies the condition $\mathrm{(A)}$, and the sequence does not contain a deformation IV applied between a circle/loop component $C$ applicable of a deformation II/III and an arc of an overlapping layer of $C$. 
Then, $\Gamma'$ is uniquely determined up to local moves $\mathcal{E}$ and deformations I. 
\end{theorem}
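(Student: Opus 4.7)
The plan is to establish the uniqueness of $\Gamma'$ by a Newman-type confluence argument. By Theorem~\ref{r-prop3-5} every sequence of deformations I--IV terminates, so Newman's lemma reduces the problem to local confluence, modulo the equivalence generated by local moves $\mathcal{E}$ and deformations I: given two single-step deformations $\Gamma \to \Gamma_1$ and $\Gamma \to \Gamma_2$ permitted by the hypotheses, it suffices to produce further sequences from $\Gamma_1$ and $\Gamma_2$ terminating in dotted graphs that agree up to $\mathcal{E}$ and deformations I.

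I would first dispatch the \emph{disjoint case}: when the local disks $D_1, D_2$ carrying the two deformations can be isotoped apart without crossing dots or crossings (an isotopy realized by moves $\mathcal{E}$), the two deformations commute and the single-step confluence square closes. The overlapping configurations are then analyzed by the pair of types involved. A type~I deformation is harmless against any other, since extra dots on an arc may always be consolidated afterwards. Two type~II or type~III deformations involving distinct circle/loop components commute by direct inspection; if they involve the same component, they produce the same output. For a type~IV combined with a type~II/III, the excluded configuration in the hypothesis (a type~IV between a removable component $C$ and an arc of an overlapping region of $C$) is precisely the one that would destroy $C$ or change its label pattern; outside this configuration the IV move and the II/III move can be exchanged, and I would check this by inspecting the finitely many local pictures.

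The principal case, and the main obstacle, is two type~IV deformations. If they involve disjoint pairs of dots, they commute. If they involve the same pair of dots but with different cores $\rho, \rho'$ of the connecting band, condition~(A), which the hypothesis imposes on every graph appearing in the reduction, directly asserts that the two outputs are related by moves $\mathcal{E}$, which is exactly what local confluence requires. When the two IV moves share exactly one dot, I would analyze the middle regions and apply condition~(A) iteratively along the band, showing that the outputs of the two orders differ only by arc isotopies across overlapping regions, i.e.\ by moves $\mathcal{E}$. The technical hypotheses are calibrated for these critical pairs: condition~(A) neutralizes the ambiguity of core choice within a single IV move, while the excluded configuration prevents a IV move from derailing a pending II/III move.

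Combining local confluence with termination from Theorem~\ref{r-prop3-5}, Newman's lemma applied to the rewriting system on equivalence classes of dotted graphs (modulo $\mathcal{E}$ and deformations I) yields the required uniqueness of $\Gamma'$ up to $\mathcal{E}$ and deformations~I.
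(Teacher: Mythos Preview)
Your overall plan—local confluence plus termination, assembled via Newman's lemma—is exactly the skeleton of the paper's own proof (the paper checks local confluence case by case, asserts the confluence statement (1), and uses Lemma~\ref{rem915} to pass from (1) to uniqueness). Two points require correction.

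First, termination. Theorem~\ref{r-prop3-5} as stated concerns only deformations~X, namely those I--IV moves that do not increase the numbers of dots and crossings; a general deformation~IV whose middle region is overlapped can create new crossings and is therefore not a deformation~X. The termination you need for arbitrary I--IV sequences is established in the \emph{proof} of Theorem~\ref{r-prop3-5} combined with Lemma~\ref{lem826}, not by the theorem statement itself. Cite those instead.

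Second, and more seriously, your treatment of two type~IV moves sharing exactly one dot has a genuine gap. Condition~(A) compares different cores between the \emph{same} pair of dots; it says nothing about a IV on $(p_1,p_2)$ versus a IV on $(p_1,p_3)$. ``Applying condition~(A) iteratively along the band'' does not bridge these two distinct band surgeries. The paper closes this diamond by a different mechanism: from the label conditions it observes that when IV is applicable to $(p_1,p_2)$ and to $(p_1,p_3)$ it is also applicable to $(p_2,p_3)$, and then each branch is followed by the complementary IV to reach a common result (Figure~\ref{C-fig2}). You should also incorporate, as the paper does via Lemma~\ref{lem913}, that under the hypotheses there are no arcs where both a deformation~III and a deformation~IV are applicable, so the III/IV critical pair is vacuous rather than merely tractable; and the nested II/III case needs the label argument of Lemma~\ref{lem912} and Claim~\ref{rem2-6}, not just ``direct inspection''.
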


Let $\mathcal{R}$ be a deformation consisting of deformations I--IV satisfying the following property. 
\begin{enumerate}
\item[(X)]
Let $\Gamma$ be a dotted diagram and let $\Gamma'$ be the dotted diagram obtained from $\Gamma$ by $\mathcal{R}$. Then, the number of dots of $\Gamma'$ is equal to or less than that of $\Gamma$, and the number of crossings of $\Gamma'$ is equal to or less than $\Gamma$, for any dotted diagram $\Gamma$ applicable of $\mathcal{R}$. 
\end{enumerate}
We call a deformation satisfying property (X) a {\it deformation X}. Then we have the following theorem; see Remark \ref{rem920}. 

\begin{theorem}\label{r-prop3-5}
For a dotted diagram $\Gamma$, any sequence of deformations consisting of deformations X is finite. In particular, there exists a good reduced diagram of $\Gamma$.
\end{theorem}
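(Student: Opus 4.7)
The plan is to construct a well-founded monovariant on dotted graphs that strictly decreases under every deformation X; finiteness then follows because the target set is well-ordered. A natural starting point is the pair $(d(\Gamma),c(\Gamma))$ of numbers of dots and crossings, ordered lexicographically on $\mathbb{N}^{2}$; by property (X) this pair is weakly non-increasing under any deformation X. It strictly decreases for deformation I (which replaces a cluster of $\geq 2$ dots on an arc by a single dot), for deformation III (which deletes the crossing at the foot of the removed loop), and for deformation II whenever the removed circle/loop carries any dot or self-crossing.

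The remaining cases are a deformation II applied to a \emph{trivial} circle (no dots, no self-crossings) and a deformation IV that happens to preserve both $d$ and $c$. To cover these, I would augment $(d,c)$ by a third coordinate $k(\Gamma)$ and work with $\mu(\Gamma)=(d(\Gamma),c(\Gamma),k(\Gamma))\in\mathbb{N}^{3}$ in lex order. A natural candidate is $k(\Gamma)=\sum_{A}|\omega(A)|$ summed over the regions of $\Gamma$: deformation II drops the bounding disk's label from $\epsilon i$ to $\epsilon(i-1)$, so $k$ decreases by $|i|\geq 1$, while deformation IV drops the middle region's label from $\epsilon i$ to $\epsilon(i-1)$, giving a contribution of $-1$ to $k$. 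The main obstacle lies in the IV case, because the associated band surgery can split or merge regions, so one must verify that the labels of neighbouring regions cannot collectively offset this gain, and that the presence of overlapping regions (each of label $\epsilon$) does not disturb the bookkeeping; a careful local case analysis using Figure~\ref{Fig3}(IV), together with the sign constraint on overlapping regions, should resolve this. If $k$ alone turns out to be insufficient, it can be refined to a finer lexicographic coordinate, for instance by pairing $k$ with the number of connected components of $\Gamma$.

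Once $\mu$ is established, well-foundedness of the lex order on $\mathbb{N}^{3}$ forbids infinite descent, so every sequence of deformations X terminates. For the \emph{in particular} clause, each good deformation I, II, III, IVa is itself a deformation X, because none of them introduces new dots or crossings; in IVa the ambient crossings are only rearranged into a loop or circle component, and the involved dots are preserved. Hence greedy application of good deformations in good order produces a sequence of X-deformations, which is finite by the first part, and its terminal graph is by definition a good reduced graph of $\Gamma$.
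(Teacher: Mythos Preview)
Your proposed third coordinate $k(\Gamma)=\sum_{A}|\omega(A)|$ fails for deformation~IV. Take $\Gamma$ to be a single oriented circle with two dots: the two regions have labels $0$ and $1$, so $k=1$. Applying~IV between the two dots (middle region the interior, label~$1$) performs a band surgery that splits the circle into two disjoint circles, each with one dot; the regions now have labels $0,1,1$, so $k=2$. Since $(d,c)=(2,0)$ is unchanged, your $\mu=(d,c,k)$ strictly \emph{increases} at this step. Your fallback coordinate (number of connected components) also goes from $1$ to $2$ here. The underlying obstruction is that band surgery can split one region of label $\epsilon i$ into two regions each still of label $\epsilon i$, so no additive function of region labels will be monotone under~IV; the ``careful local case analysis'' you anticipate cannot succeed.

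The paper does not use a potential function for~IV at all. After disposing of types I, II, III by direct counting (essentially your $(d,c)$ argument, together with the observation that any circle component newly created by a~IV or~III carries a dot, so~II cannot be fed indefinitely), it proves separately in Lemma~\ref{lem826} that no sequence can contain infinitely many deformations~IV. The idea there is to represent each~IV by the core of its band, note via Lemma~\ref{lem0901} that all cores attached at a given dot lie on one fixed side of the arc, and then argue by pigeonhole: arbitrarily many~IV's would force arbitrarily many mutually disjoint parallel cores between some fixed pair of dots, whose surgeries produce arbitrarily many circle components each carrying a dot --- contradicting the fact that~IV preserves the dot count.

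For the \emph{in particular} clause, the paper does not verify property~(X) for a lone~IVa but rather for the composite IVa-then-III (case~a1) or IVa-then-II (case~a2), which is precisely what the good-order rule packages together; it is this composite that strictly decreases $(d,c)$.
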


\section{Proofs of Theorems \ref{prop3-5} and \ref{r-prop3-5}}\label{sec-proof}

We modify the proof Lemma 6.2 in \cite{N}. See also Remark  \ref{rem902}.

\begin{proof}[
Proof of Theorem \ref{prop3-5}]
By Lemma \ref{rem915}, 
it suffices to show that 
(1) when we have diagrams $\Gamma_1$ and $\Gamma_2$ obtained from  $\Gamma$ by a finite sequence of deformations I--IV and local moves $\mathcal{E}$ satisfying the condition (A), $\Gamma_1$ and $\Gamma_2$ can be deformed by deformations I--IV and local moves $\mathcal{E}$ to the same dotted diagram, with the assumption of Theorem \ref{prop3-5}. 
If there are two dots $p_1$ and $p_2$ on an arc $\alpha$, we can apply a deformation IV between $p_1$ and $p_2$. By the condition of signs for the overlapping layers of the middle middle block and the condition (A), we see that any deformation IV between $p_1$ and $p_2$ deforms $\alpha$ into an arc $\alpha'$ and a circle component $C$, where $\alpha'$ is an arc with a dot obtained from  $\alpha$ by a deformation $\mathcal{E}$ and a deformation I, and $C$ is a circle component with a dot applicable of a deformation II; we remark that by the former condition, we see that any core associated with the deformation IV, which is an arc connecting $p_1$ and $p_2$ with no self-intersections, does not intersect with $\alpha$ except at the endpoints $p_1$ and $p_2$. 
And if there is a pair of arcs $\alpha, \alpha'$ with dots where deformations IV are applicable, 
then, no matter what times we apply deformations IV, the resulting dotted diagrams can be deformed to the same dotted diagram as when we consider $\alpha, \alpha'$ with one dot on each arc; see the left figure of Figure \ref{B-fig11}. 
Similarly, if there is an arc $\alpha$ with dots and several arcs where deformations IV are applicable between $\alpha$ and the other arcs, 
then, no matter what times we apply deformations IV between $\alpha$ and the other arcs, the resulting dotted diagrams can be deformed to the same dotted diagram as when we consider $\alpha$ with one dot; see the right figure of Figure \ref{B-fig11}. 
Further, deformations II and III are not effected by the number of dots. 
So we can assume that there is at most one dot in each arc. 
 
If we have arcs where both deformations II and IV are applicable, then, since by assumption we do not have a deformation IV applied between a circle component $C$ applicable of a deformation II and an arc of an overlapping layer of $C$, the resulting dotted diagrams can be deformed to the same dotted diagram; see Figure \ref{B-fig12}. We remark that we do not have a deformation IV between the arc used in the local move $\mathcal{E}$ and an arc of its overlapping layer. 
And by Lemma \ref{lem913}, there do not exist arcs where both deformations III and IV are applicable. 

When a deformations II or III can be applied, and the interior of the embedded disk $D$ whose boundary is the circle/loop component of the deformations II or III contains arcs where a deformation IV is applicable, by the condition of the signs of the overlapping layers of a deformation IV, the deformation II or III is applicable after the deformation IV. 
Further, by Lemma \ref{lem912}, when we have a circle/loop component $C$ applicable a deformation II/III  such that the interior of the disk with the boundary $C$ contains another circle/loop component $C'$ applicable of a deformation II or III, the result of the deformations II/III to $C$ and then $C'$ is the same with that of the deformations II/III to $C'$ and then $C$, up to local moves $\mathcal{E}$. 
Hence we see the following. We consider a dotted diagram $\Gamma$ that deforms to $\Gamma'$ by a deformation II or III. 
We denote by $G$ the circle/loop components and arcs of $\Gamma$ which are applicable of deformations I--IV in $\Gamma$ and $G \cap \Gamma'=G$. Then $G$ in $\Gamma'$ are applicable of the same deformations I--IV, where two deformations are the {\it same} if they are the same as deformations of diagrams; we remark that the labels of the regions might differ.  

Further, if there are plural circle/loop components applicable of deformations II or III that have overlapping with each other, since the regions of loop/circle components have all positive/negative labels, we can delete all components applying deformations II or III, and the result is independent of the order of deformations. 

Next we consider deformations IV. 
By taking the middle block to be sufficiently thin, we assume that the middle block does not contain circle/loop components. 
We recall that the sequence of deformations satisfies the condition (A). 
 
We consider the case when we have several choices of deformations IV. 
Suppose that there are three dots $p_1, p_2, p_3$ and we 
have two choices of applying a deformation IV: between $(p_1, p_2)$ or $(p_1, p_3)$. Then we can apply a deformation IV also to the other pair of dots $(p_2, p_3)$, and the result of each choice can be deformed to the same dotted diagram (up to local moves $\mathcal{E}$), as shown in the top and middle rows of figures in Figure \ref{C-fig2};  
 we remark that in the middle figure of the top row in Figure \ref{C-fig2}, we cannot apply a deformation IV to a pair of dots other than given in the figure, by seeing the labels of the regions. 

Thus, we see that if we have several possible parts of a dotted diagram $\Gamma$ to each of which a deformation I--IV, denoted by $\mathcal{R}_i$, is applicable, then, for any $i$, the dotted diagram $\Gamma'_i$ obtained by $\mathcal{R}_i$ can be deformed to the same dotted diagram by deformations I--IV and $\mathcal{E}$. 
This implies (1), and we have the required result. 
\end{proof}

\begin{figure}[ht]
\centering
\includegraphics*[height=4.5cm]{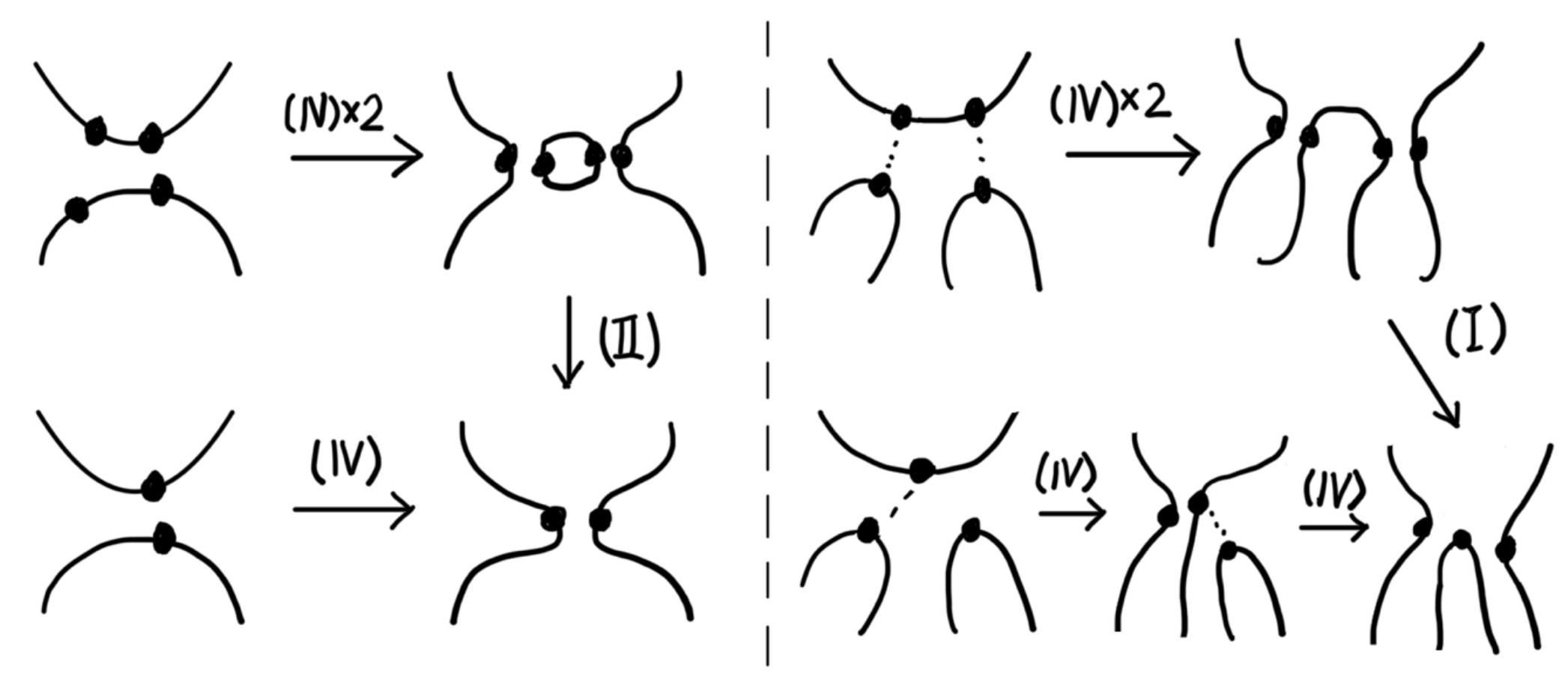}
\caption{If there is a pair of arcs $\alpha, \alpha'$ with dots where deformations IV are applicable, 
then, no matter what times we apply deformations IV, the resulting dotted diagrams can be deformed to the same dotted diagram as when we consider $\alpha, \alpha'$ with one dot on each arc (left figure). If there is an arc $\alpha$ with dots and several arcs where deformations IV are applicable between $\alpha$ and the other arcs, 
then, no matter what times we apply deformations IV between $\alpha$ and the other arcs, the resulting dotted diagrams can be deformed to the same dotted diagram as when we consider $\alpha$ with one dot (right figure). We omit orientations of arcs and labels of regions.} 
\label{B-fig11}
\end{figure}

\begin{figure}[ht]
\centering
\includegraphics*[height=6.5cm]{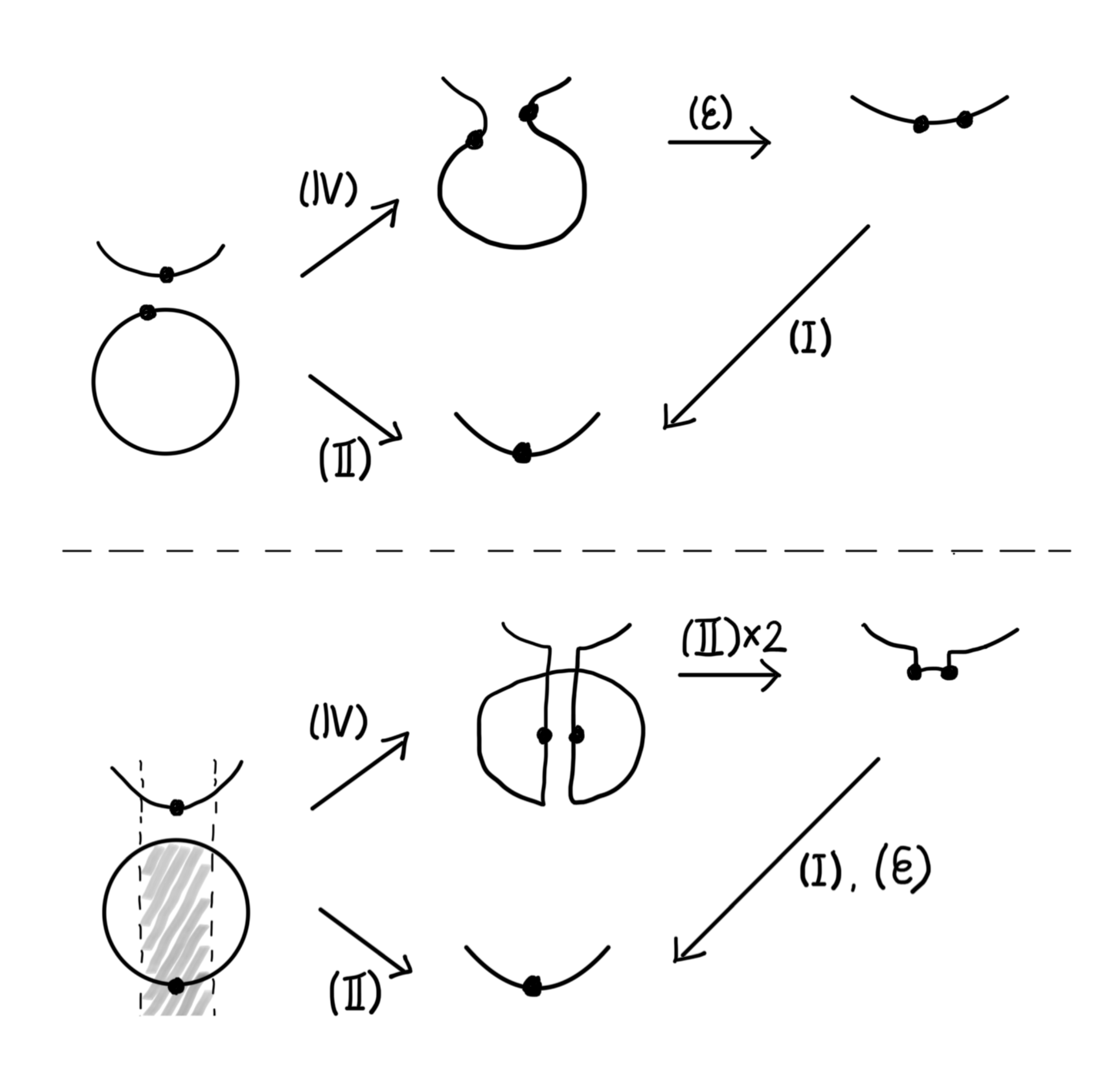}
\caption{If we have arcs where both deformations II and IV are applicable, then the resulting dotted diagrams can be deformed to the same dotted diagram, where we omit the orientations of arcs and labels of regions; we remark that in the lower figure, the shadowed area is the overlapping layer. We also remark that the upper figure is \cite[Figure 6.4]{N}.}
\label{B-fig12}
\end{figure}

\begin{figure}[ht]
\centering
\includegraphics*[height=7cm]{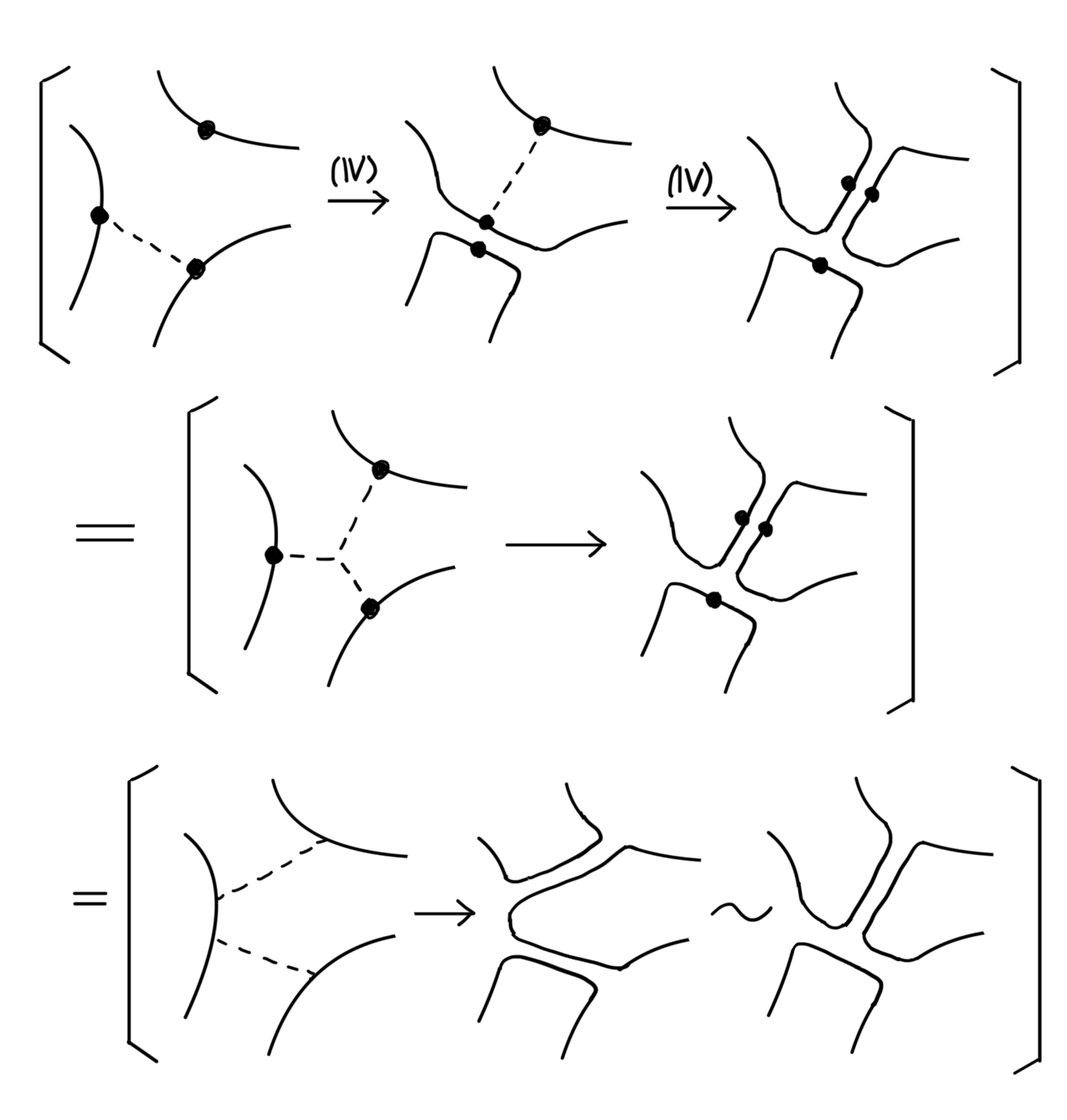}
\caption{A deformation IV can be regarded as the result of band surgery along an untwisted band. In the top and the bottom rows of figures, we denote by dotted arcs cores of bands. We assume that each appearing dotted diagram satisfies the condition (A) (see Theorem \ref{prop3-5}). 
If there are $n$ dots between which there are several possible sequences of $n-1$ deformations IV, then the result is independent of the choice of sequence up to local moves $\mathcal{E}$ (the middle row). The deformation sequence can be regarded as band surgery along mutually disjoint bands such that the endpoints of cores are in the neighborhoods of dots, where we omit dots in the figures (the bottom row). }
\label{C-fig2}
\end{figure}

\begin{proof}[
Proof of Theorem \ref{r-prop3-5}]

Let $\Gamma=\Gamma_0, \Gamma_1, \Gamma_2, \ldots, \Gamma_n$ be dotted diagrams that appear in a sequence of deformations X. 
We denote by $\mathcal{R}_i$ the deformation such that $\Gamma_{i+1}$ is obtained from $\Gamma_i$ by $\mathcal{R}_i$. 
 
When $\mathcal{R}_i$ is a deformation I, then the number of dots in ${\Gamma}_{i+1}$ is less than that of ${\Gamma}_{i}$, and the number of crossings of ${\Gamma}_{i+1}$ equals that of ${\Gamma}_{i}$. 
When $\mathcal{R}_i$ is a deformation III, then the number of dots of  ${\Gamma}_{i+1}$ is equal to or less than that of ${\Gamma}_{i}$, and the number of crossings of ${\Gamma}_{i+1}$ is less than that of ${\Gamma}_{i}$. 
Thus, in any sequence of deformations I--IV, the number of deformations I and III is finite. 

Since a deformation II decreases the number of circle components, 
if there exists a sequence consisting of deformations II, the number of deformations is finite. 
And if a deformation IV creates a circle component applicable of a deformation II, then the circle component has a dot; hence the resulting graph can be deformed by a deformation II to a dotted diagram the number of whose dots is less than that of the initial dotted diagram, and the number of whose crossings is equal to or less than that of the initial dotted diagram. And we have a similar situation if a deformation III creates a circle component applicable of a deformation II. Thus, in any sequence of deformations I--IV, the number of deformations II is also finite. 

Hence, since a deformation X consists of deformations I--IV, 
it suffices to show that for an arbitrary dotted diagram $\Gamma$, there does not exist a sequence of deformations consisting of 
infinite number of deformations IV. 
This is shown in Lemma \ref{lem826}. Thus, for a dotted diagram $\Gamma$, any sequence of deformations X applicable to $\Gamma$ is finite. 

Now we show that good deformations I, II, III, and IVa satisfy the property (X). We see that deformations I, II, and III satisfy the property (X). From now on, we show that a good deformation IVa satisfies the property (X). 
We call a deformation IVa satisfying the condition (a1) (respectively (a2)) a deformation IV (a1) and (respectively IV (a2)). For a deformation IV (a1), we take the core sufficiently near the adjacent edges of the crossing. 
When $\mathcal{R}_i$ is a deformation IV (a1), then, the number of dots of $\Gamma_{i+2}$ is equal to or less than that of ${\Gamma}_{i}$, and the number of crossings of $\Gamma_{i+2}$ 
is less than that of $\Gamma_i$. When $\mathcal{R}_i$ is a deformation IV (a2), then, the number of dots of $\Gamma_{i+2}$ is less than that of ${\Gamma}_{i}$, and the number of crossings of $\Gamma_{i+2}$ 
is less than or equal to that of $\Gamma_i$. 
Thus a deformation IVa is a deformation X. 
Hence, for a dotted diagram $\Gamma$, there exists a good reduced diagram of $\Gamma$, which is the required result. 
\end{proof}

\begin{remark}\label{rem920}
We remark that by the same argument in the proof of Theorem \ref{r-prop3-5}, we can see the following. 
We denote by a deformation IVb a deformation IV that does not have overlapping layers. 
Then, for a dotted graph $\Gamma$, we can construct a sequence of deformations consisting of a finite number of deformations I, II, III, and IVb, so that the result is a dotted diagram to which we cannot apply deformations I, II, III, and IVb. 
\end{remark}

\section{Deformations of admissible dotted diagrams and dissolutions of lattice diagrams}\label{deformation}

In this section, we investigate relation between deformations of admissible dotted diagrams and transformations of lattice diagrams. We show that if an admissible dotted diagram is deformed to the empty graph by a sequence of good deformations in good order, then its associated lattice diagram has a dissolution with minimal area, and we obtain deformations of dotted diagrams describing a dissolution of lattice diagrams with minimal area (Theorem \ref{thm3-7}). 
Further, we show that if a dotted diagram has \lq\lq many'' dots, then it is admissible (Proposition \ref{prop3-8}), and a dotted diagram with ``many'' dots is reduced to the empty graph (Proposition \ref{prop3-9}). 

 When we have a lattice diagram $P$ with initial vertices $\mathrm{Ver}_0(P)$ and terminal vertices $\mathrm{Ver}_1(P)$, 
we have given a transformation along a rectangle $R$ that has vertices in $\mathrm{Ver}_0(P)$ as its diagonal vertices. 
The transformation of $P$ along the rectangle $R$ is 
the transformation from $P$ to the lattice diagram whose initial vertices  are $t(\mathrm{Ver}_0(P); R)$ and terminal vertices are $\mathrm{Ver}_1(P)$. 
We call such a transformation of $P$ a {\it normal transformation}. 
Now, we define a reversed transformation as follows. 
\begin{definition}\label{def818}
Let $P$ be a lattice diagram, and let $R$ be 
a rectangle that has vertices in $\mathrm{Ver}_1(P)$ as its diagonal vertices. 
Then, we define the {\it reversed transformation} of $P$ along the rectangle $R$ as
the transformation from $P$ to the lattice diagram whose initial vertices  are $\mathrm{Ver}_0(P)$ and terminal vertices are $t(\mathrm{Ver}_1(P); R)$. 
\end{definition}

\begin{theorem}\label{thm3-7}
Let $\Gamma$ be an admissible dotted diagram associated with a lattice diagram $P$. 
Then, 
if $\Gamma$ has a good reduced diagram that is the empty graph, then there exists a dissolution of $P$ with minimal area, that is, a dissolution satisfying the equality of (\ref{eq2}). 
\end{theorem}

\begin{proof}
Suppose that $\Gamma$ has a good reduced diagram that is the empty diagram. 
By an argument as in the proof of \cite[Theorem 5.9]{N}, for each deletion of a circle/loop component, there exists a corresponding dissolution of the lattice diagram realizing the minimal area; here we remark that in the case of a loop component, the dissolution of the lattice diagram might consists of normal transformations and reversed transformations.

When we have a deformation IVa satisfying the condition (a2) that creates a circle component $C$ from two concentric circle components and then a deformation II that deletes $C$, the lattice diagram satisfies the condition (2) in  \cite[Theorem 5.9]{N}, and hence there exists a transformation of the lattice diagram that realizes the deformations II and III. 
For each deformation IVa satisfying the condition (a1) that is applied between adjacent arcs of a crossing, there exists a normal/reversed transformation of the lattice diagram that realizes the deformation IVa by Lemma \ref{lem-822}, that is the transformation as shown in Figure \ref{F-fig1}. We denote the resulting lattice diagram by $P$. Then, as shown in Figure \ref{F-fig1}, $P$ is not the result of the transformation of the dotted diagram, denoted by $G$, but 
since the deformation sequence is in good order, we delete the loop component of $G$ by a deformation III, and the result is the dotted diagram associated with $P$. 

Hence, by Lemma \ref{lem-821}, we have a dissolution of the lattice diagram with minimal area, consisting of only normal transformations. Thus we have the required result. 
 \end{proof}

\begin{figure}[ht]
\centering
\includegraphics*[height=4cm]{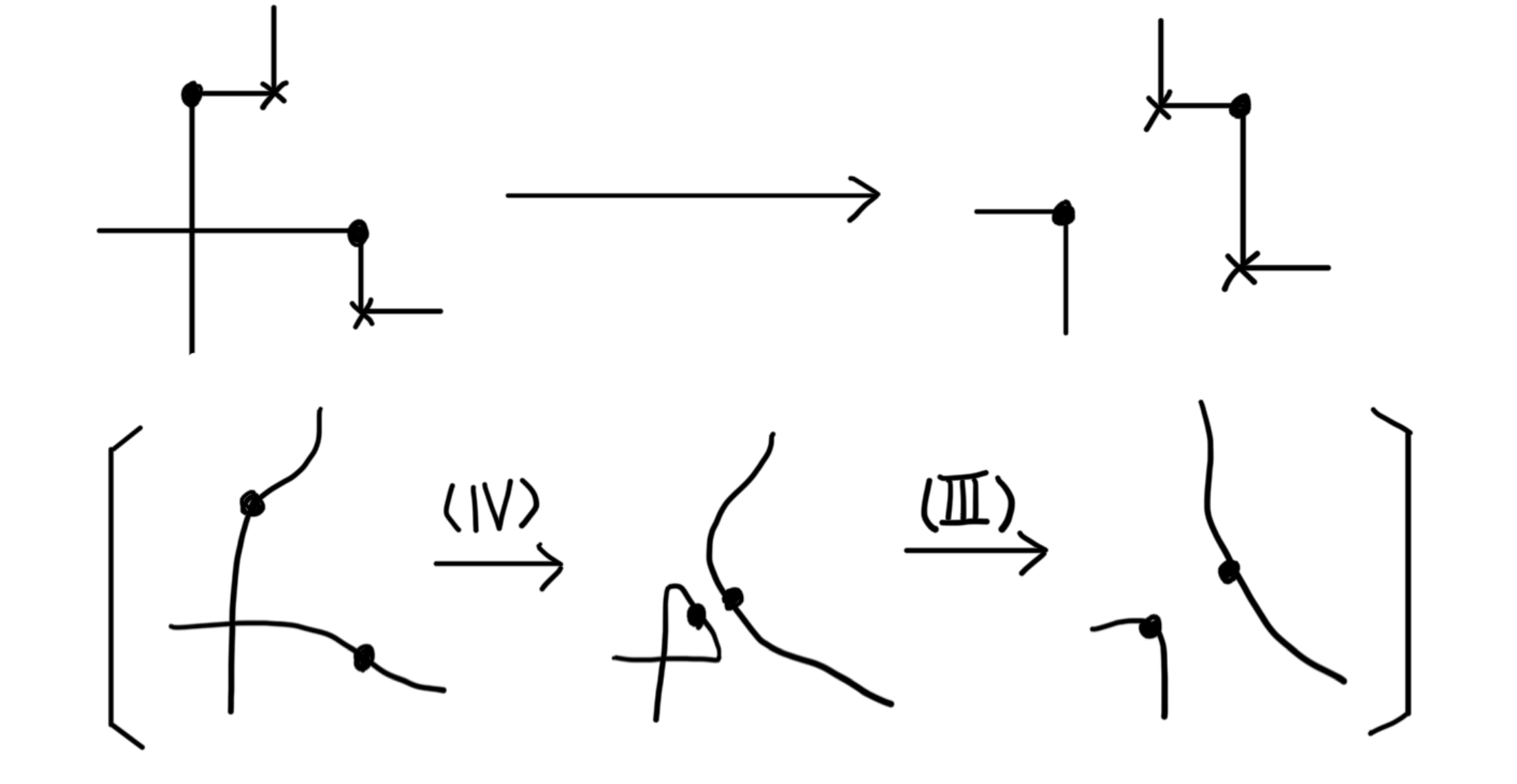}
\caption{Transformations of a lattice diagram along a rectangle   and the corresponding sequence of deformations IVa and III in good order, where we omit orientations of edges and labels of regions.}
\label{F-fig1}
\end{figure}

A dotted diagram is admissible when it has \lq\lq many'' dots. 

\begin{proposition}\label{prop3-8}
Let $\Gamma$ be a dotted diagram such that each arc has at least two dots. Then, $\Gamma$ is admissible. 
\end{proposition}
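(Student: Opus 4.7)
The plan is to construct explicitly a lattice polytope $P$ whose associated dotted graph is isotopic to $\Gamma$. Each dot of $\Gamma$ will become an initial vertex of $P$ (a corner where the orientation flips from $y$-direction to $x$-direction), and unmarked terminal vertices (corners flipping $x\to y$) will be inserted along the arcs as needed. Each crossing of $\Gamma$ is realized as an orthogonal transverse intersection of one $x$-edge (vertical, in this paper's convention) and one $y$-edge (horizontal).

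First, at each crossing of $\Gamma$ I make an arbitrary binary choice of which diagonal pair of arcs is $x$-directional and which is $y$-directional. This labeling determines, at the two endpoints of each edge $e$ of $\Gamma$ (a maximal sub-arc between two crossings, or a closed loop for a crossing-free component), an entering direction $D_1$ and exiting direction $D_2$ in $\{x,y\}$. Along $e$, the sequence of corners must alternate between dots (flipping $y\to x$) and terminals (flipping $x\to y$); a direct count shows that the number of terminals needed is $t=k-1,\, k,\, k,$ or $k+1$ for the four possible combinations of $(D_1,D_2)$, where $k$ is the number of dots on $e$. Since $k\geq 2$ by hypothesis, $t$ is nonnegative for every choice of direction labeling, so the required alternation can always be realized abstractly.

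Next, I replace the arcs of $\Gamma$ by concrete rectilinear paths. Starting from a topological immersion of $\Gamma$ in $\mathbb{R}^2$, fix a thin tubular neighborhood of each edge of $\Gamma$ that is disjoint from all other edges outside of the shared crossings. Inside each such neighborhood I deform the arc into a zigzag of alternating $x$- and $y$-segments, with corners placed exactly at the dots of $\Gamma$ (realized as $y\to x$ corners) and at the inserted terminal positions ($x\to y$ corners), and around each crossing I realize the crossing as an orthogonal transverse intersection. Choosing the coordinates of corners generically ensures that distinct $x$-edges have distinct $x$-components and distinct $y$-edges have distinct $y$-components, as required by the definition of a lattice polytope. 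The resulting rectilinear figure bounds a lattice polytope $P$ whose initial vertices are exactly the dots of $\Gamma$ and whose crossings are exactly the crossings of $\Gamma$.

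The main obstacle is to verify that the rectilinear replacement remains inside each tubular neighborhood, so that no extra intersections between arcs are introduced and the topological type of $\Gamma$ is preserved. This is where the hypothesis $k\geq 2$ is used: having at least two dots per edge provides enough zigzagging flexibility that any of the four combinations $(D_1,D_2)$ can be realized inside an arbitrarily thin neighborhood of the edge, without the rectilinear path being forced to leave it. With this realization in hand, the associated dotted graph of $P$ coincides with $\Gamma$ up to ambient isotopy, so $\Gamma$ is admissible.
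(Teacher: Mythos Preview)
Your proof is correct and takes essentially the same approach as the paper: fix an $x$/$y$ direction assignment at each crossing, then argue that having at least two dots on each arc gives enough flexibility to realize the arc as a rectilinear path matching the prescribed boundary directions. The paper phrases the key flexibility claim slightly differently---as the ability to connect any given pair of points with any prescribed initial and terminal edge directions and orientations---and does not include your explicit terminal-vertex count, but the underlying strategy is the same and at a comparable level of informality.
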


\begin{proof}
For a lattice diagram $P$, we call the part of $P$ corresponding to an arc of its associated dotted diagram an {\it arc} of $P$. 

When we consider an arc of a lattice diagram with at least two initial vertices, we can construct an arc $\alpha$, connecting a given pair of distinct points, such that the edges connecting to the points are in given directions, that is, $\alpha$ consists of edges such that the initial edge is in the $a$-direction with orientation coherent/incoherent with the $a$-axis, and the terminal edge is in the $b$-direction with orientation coherent/incoherent with the $b$-axis, for any given pair $(a,b)$ and orientations ($a,b \in \{x,y\}$). Hence, by fixing each crossing of $\Gamma$ to be a crossing consisting of edges in the $x$-direction and $y$-direction, 
we can construct the corresponding lattice diagram. Thus $\Gamma$ is admissible. 
\end{proof}

\begin{proposition}\label{prop3-9}
Let $\Gamma$ be a dotted diagram such that each arc has at least one dot. Then, the reduced diagram of $\Gamma$ is the empty graph. 
Further, if $\Gamma$ is admissible, then any lattice diagram associated with $\Gamma$ has a dissolution with minimal area. 
\end{proposition}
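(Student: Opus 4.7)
The plan is to deduce both assertions by exhibiting an explicit sequence of good deformations in good order that reduces $\Gamma$ to the empty graph. Once such a sequence exists, the first assertion follows by the uniqueness statement of Theorem \ref{prop3-5}, and the second is immediate from Theorem \ref{thm3-7}(1).

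I would proceed by induction on the number of crossings $c(\Gamma)$. When $c(\Gamma)=0$, $\Gamma$ is a disjoint union of embedded circle components, each carrying at least one dot by hypothesis; pick an innermost such circle $C$, whose bounded disk then has label $\pm 1$, and delete $C$ by deformation II. Iterating the same step on the remaining innermost circles reduces $\Gamma$ to the empty graph.

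When $c(\Gamma)>0$, pick a crossing $c$ together with two adjacent arcs whose induced orientations are compatible with deformation IV and whose between-face (the middle region) carries a nonzero label. By hypothesis each of these two arcs carries a dot, so a deformation IVa of type (a1) applies; the good-order rule then dictates the immediate removal of the created loop by deformation III, merging the two formerly adjacent arcs into a single arc that still carries a dot (since the loop carried a dot). The composite move strictly decreases the number of crossings while preserving the dot-on-every-arc invariant, completing the induction. If moreover $\Gamma$ is admissible with associated lattice polytope $P$, Theorem \ref{thm3-7}(1) then yields a transformation of $P$ with minimal area.

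The main obstacle is the selection step in the crossing subcase: one must always be able to find a pair of adjacent arcs at some crossing whose induced orientations are compatible with deformation IV and whose middle region has nonzero label. The label pattern around a crossing (the four incident faces carry labels of the form $k, k+1, k+2, k+1$ read cyclically, for some integer $k$, by the rotation-number convention) guarantees that at least two of them are nonzero; matching this observation against the orientation constraint of deformation IV, possibly by relocating to a different crossing when the first choice does not cooperate, is the technical heart of the argument.
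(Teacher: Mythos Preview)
Your inductive step---eliminating one crossing at a time via a IVa(a1) move followed immediately by III---matches the paper's step (1) closely, and your observation about the label pattern $k,k+1,k+2,k+1$ around a crossing is correct and is essentially what the paper uses to assert that such a IVa is always available.

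The genuine gap is your base case $c(\Gamma)=0$. The assertion that an innermost circle bounds a disk of label $\pm 1$ is false: the label is the rotation number with respect to the \emph{entire} graph, so for an innermost circle sitting inside several larger concentric circles the disk label can be any nonzero integer (or even zero). Worse, deformation~II need not apply to an innermost circle at all. Take three nested circles with the innermost one oriented oppositely, so that the labels from outside in read $0,1,2,1$; the only innermost circle bounds a disk of label $1$ whose neighbouring region has label $2$, and neither orientation of the II-condition $\epsilon i,\epsilon(i-1)$ is met. Hence a union of dotted circles cannot in general be emptied by II alone; one must merge circles by deformations~IV first. The paper's step~(2) carries out exactly this analysis: it follows a path from the unbounded region inward, locates a region whose label has maximal absolute value along the path, and either deletes an innermost circle by~II or merges the boundary circles of that region by~IV before applying~II (with a further recursion when boundary circles have mixed orientations).

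This also affects your route to the second assertion. You invoke Theorem~\ref{thm3-7}(1), which needs the whole reduction to consist of good deformations in good order. The paper does \emph{not} claim that the circle-reduction in step~(2) uses only good deformations; instead it stops at the intermediate graph $\Gamma'$ of disjoint dotted circles, uses Proposition~\ref{prop919} to realise the passage $\Gamma\to\Gamma'$ by area-decreasing rectangle moves on the lattice polytope, and then invokes \cite[Theorem~5.9]{N} directly to produce a minimal-area transformation of the lattice polytope associated with $\Gamma'$. If you want to keep your cleaner route through Theorem~\ref{thm3-7}(1), you would have to show that the circle-merging moves needed in the base case can all be taken of type IVa(a2); this is plausible but is an additional argument you have not supplied.
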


\begin{proof}
We show that (1) we can deform $\Gamma$ by good deformations I, II, III, IVa in good order to a dotted diagram $\Gamma'$ consisting only of mutually disjoint circle components such that each circle component has at least one dot, and (2) we can deform the dotted diagram $\Gamma'$ by deformations I--IV to the empty graph, and (3) if $\Gamma$ is admissible, then, for a lattice diagram whose associated dotted diagram is $\Gamma'$, there exists a dissolution with minimal area. Then, together with the argument in the proof of Theorem \ref{thm3-7}, we have the required result. 

We show (1). 
Since each arc has a dot, 
We see that around each crossing, we have a pair of adjacent arcs between which a deformation IVa is applicable, if they do not form a loop component. 
So we apply deformations IVa for such arcs, and then we apply deformations III to delete the created loop components. Then we have a dotted diagram consisting of circle components that are mutually disjoint and each circle component has at least one dot. 

We show (2). 
From now on, we call a circle component simply a {\it circle}. 
We consider a set of concentric circles in $\Gamma'$ from an outermost circle to an innermost circle. Then, let $\rho$ be a path from the outermost region to the innermost disk such that $\rho$ crosses each concentric circle exactly once. 
Then, the labels of the regions crossed by $\rho$ form a sequence of integers from zero to some $\epsilon n$ for a positive integer $n$ and $\epsilon \in \{+1, -1\}$. If the sequence does not contain zero other than the initial zero, then, $n$ is not zero and the sequence ends with $\epsilon (n-1), \epsilon n$; hence we apply a deformation II and delete the innermost circle.

If the sequence contains zero other than the initial zero, then we have a subsequence $0, \epsilon 1, \epsilon 2, \ldots, \epsilon (n-1), \epsilon n, \epsilon (n-1), \ldots, \epsilon 2, \epsilon 1, 0$, for a positive integer $n$ and  $\epsilon \in \{+1, -1\}$. Let $C_1$ and $C_2$ be the concentric circles  whose arcs bounds the region with the label $\epsilon n$ such that $C_1$ is the outer circle. We denote by $R$ the region with the label $\epsilon n$. 
The boundary of the closure of $R$, denoted by $\partial R$, consists of $C_1$, $C_2$ and several circles, denoted by $C_3, \ldots, C_m$. 

\noindent
(Case 1)
If the circles other than $C_1$ have the same orientations, then the regions adjacent to $R$ in the disks with boundaries  $C_2, \ldots, C_m$ have the label $n-1$, and we apply deformations IV between $C_1, \ldots, C_m$ to make $\partial R$ into one circle and then we apply a deformation II to delete $R$.

\noindent
(Case 2)
If there exists a circle $C_j$ $(j=1, \ldots, m)$ with orientation opposite to that of $C_2$, then, the region adjacent to $R$ in the disk with the boundary $C_j$ has the label $\epsilon(n+1)$, and we take a new path $\rho$ from $R$ to an innermost circle in the disk with the boundary $C_j$. If the sequence of the labels does not contain $\epsilon n$ other than the initial $\epsilon n$, then, we can delete the innermost circle by a deformation II. And if the sequence of the labels contains $\epsilon n$ other than the initial $\epsilon n$, then, we consider the circles which are the boundaries of the region the absolute value of whose label is the largest, and repeat the same process as above. 
Since the circles of $\Gamma'$ are finite, 
by repeating this process, we can delete circles by the method described in (Case 1) or by a deformation II, until we have the empty graph. 

We show (3). 
Suppose that $\Gamma'$ is admissible. 
We recall that for a lattice diagram $P$ whose dotted diagram  contains a circle component $C$ applicable of a deformation II, there exists a sequence of transformations of the lattice diagrams from $P$ to $P\backslash C$ realizing the minimal area, where we denote by the same notation $C$ the part of $P$ corresponding to the circle component $C$. 

We consider the deformation described in (Case 1) in the above argument. 
Let $P$ be a lattice diagram whose dotted diagram is $\Gamma'$. We denote by the same notation $\partial R$ the part of $P$ corresponding to $\partial R$ in $\Gamma'$. 
Then, by \cite[Theorem 5.9]{N}, there exists a sequence of transformations of the lattice diagrams from $P$ to $P\backslash \partial R$ realizing the minimal area. 
Thus, by the argument concerning (2), we see that there exists a dissolution of $P$ with minimal area. 
Thus we have the required result. 
\end{proof}

By the proof of Proposition \ref{prop3-9}, we have the following. 
\begin{proposition}
Let $\Gamma$ be a dotted diagram such that each arc has at least one dot. Then, $\Gamma$ is deformed by a sequence of good deformations I, II, III, IVa in good order to a dotted diagram $\Gamma'$ consisting only of mutually disjoint circle components such that each circle component has at least one dot.  
\end{proposition}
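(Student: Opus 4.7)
The plan is to extract the construction from step (1) of the proof of Proposition \ref{prop3-9}, which already establishes precisely this claim, and present it as a short standalone argument by induction on the number of crossings of $\Gamma$. The base case (no crossings) is immediate: since every vertex has degree $2$ or $4$ and there are no crossings, the components of $\Gamma$ are immersed circles with possibly several dots, i.e.\ mutually disjoint circle components, each carrying at least one dot by hypothesis, so we may take $\Gamma' = \Gamma$.

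For the inductive step, I would fix an arbitrary crossing $c$ of $\Gamma$ and focus on its four incident arcs, which form two pairs of diagonal arcs with coherent orientation. Let $\alpha, \beta$ be any pair of \emph{adjacent} arcs at $c$. If $\alpha \cup \beta$ (together with $c$) already forms a loop component, I would simply apply deformation III at $c$. Otherwise, since each arc of $\Gamma$ carries at least one dot, both $\alpha$ and $\beta$ have dots, and hence a deformation IV is applicable between a dot on $\alpha$ and a dot on $\beta$ along a core supported in a small neighborhood of $c$; by the choice of this core near the crossing, the resulting smoothing produces a loop component formed from $\alpha, \beta$ at $c$, so condition (a1) of Definition \ref{def3-3} holds and the deformation is a IVa. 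Good order then mandates that the immediately following move be the deformation III deleting this newly created loop. Together, this IVa$+$III pair removes the crossing $c$ without introducing new crossings.

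The key invariant to maintain is that \emph{every arc of the deformed graph still carries at least one dot}, so that the induction can continue. After the IVa the two concerned arcs are reconnected into two new arcs, each of which inherits dots (one dot from each of $\alpha, \beta$ is used, but by assumption there was at least one dot on each arc, and the IVa relocates dots rather than destroying arcs). The subsequent III removes the loop component: since the loop carries at least one dot, III leaves a single arc equipped with a dot. Thus every arc of the new dotted graph has at least one dot, and the number of crossings has strictly decreased by one.

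Iterating this procedure, after finitely many IVa$+$III pairs in good order we arrive at a dotted graph $\Gamma'$ with no crossings, consisting therefore of mutually disjoint circle components, each of which contains at least one dot by the preserved invariant. The main technical point, and really the only place care is needed, is the bookkeeping for the dot-invariant through the IVa$+$III pair: one must check that no arc is created with zero dots (so that the induction hypothesis continues to apply and so that the final circles are guaranteed to be dotted). Everything else — applicability of IVa via (a1), strict decrease of the crossing count, good order of the resulting sequence — is immediate from the local pictures in Figure \ref{Fig3} and the definitions in Section \ref{sec2}.
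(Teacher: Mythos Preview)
Your overall strategy coincides with the paper's: its proof of this proposition is just a pointer to step~(1) in the proof of Proposition~\ref{prop3-9}, which removes each crossing by a IVa of type~(a1) followed by the mandated III, exactly as you propose. Your inductive framing and your check of the dot invariant are fine additions.

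There is, however, a genuine gap. You take $\alpha,\beta$ to be \emph{any} pair of adjacent arcs at $c$ and infer that a deformation~IV is applicable merely because both carry dots. But Definition~\ref{def3-2}(IV) also requires that the arcs admit induced orientations and that the middle region have nonzero label $\epsilon i$ with the adjacent region labelled $\epsilon(i-1)$; and condition~(a1) further demands that the created loop be applicable of~III, which imposes the same kind of label constraint again. At a crossing only two of the four adjacent pairs are orientation-compatible for~IV, and of these only one in general meets the label condition (the four quadrant labels are $k,\,k+1,\,k,\,k-1$ in cyclic order; one must take the quadrant with label $k+1$ when $k\ge 0$ and the one with label $k-1$ when $k\le 0$). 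Your ``already a loop, apply~III'' branch has the same defect: III carries a label hypothesis that an arbitrary pre-existing loop need not satisfy. The repair is to replace ``any pair'' by a specific, correctly chosen pair and to note that such a choice always exists; once you do this, your induction goes through unchanged.
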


\section{Remark and Lemmas}\label{sec-lemma}

\begin{remark}\label{rem902}
Let $\Gamma$ be a dotted diagram. 
\begin{enumerate}[(1)]

\item
Suppose that we introduce a deformation III' that deletes a loop component with no condition for the labels of regions. Then, an application of a deformation IV to a loop component $C$ is possible where $C$ is applicable of a deformation III', and the result of a deformation IV is the result of a deformation III' and a deformation V as in Figure \ref{B-fig14}; see Figure \ref{B-fig14}, and we cannot use the argument in the proof of Theorem \ref{prop3-5}. 

\item
If we include deformations V and $\mathcal{E}$ besides deformations I--IV, there exists a dotted diagram $\Gamma$ and a sequence of deformations whose result is $\Gamma$ itself; there exists an infinite loop of deformations, and we cannot have a reduced diagram, and we cannot use the argument in the proof of Theorem \ref{r-prop3-5}. For example, see Figure \ref{C-fig4}.

\end{enumerate}
\end{remark}

\begin{figure}[ht]
\centering
\includegraphics*[height=4.5cm]{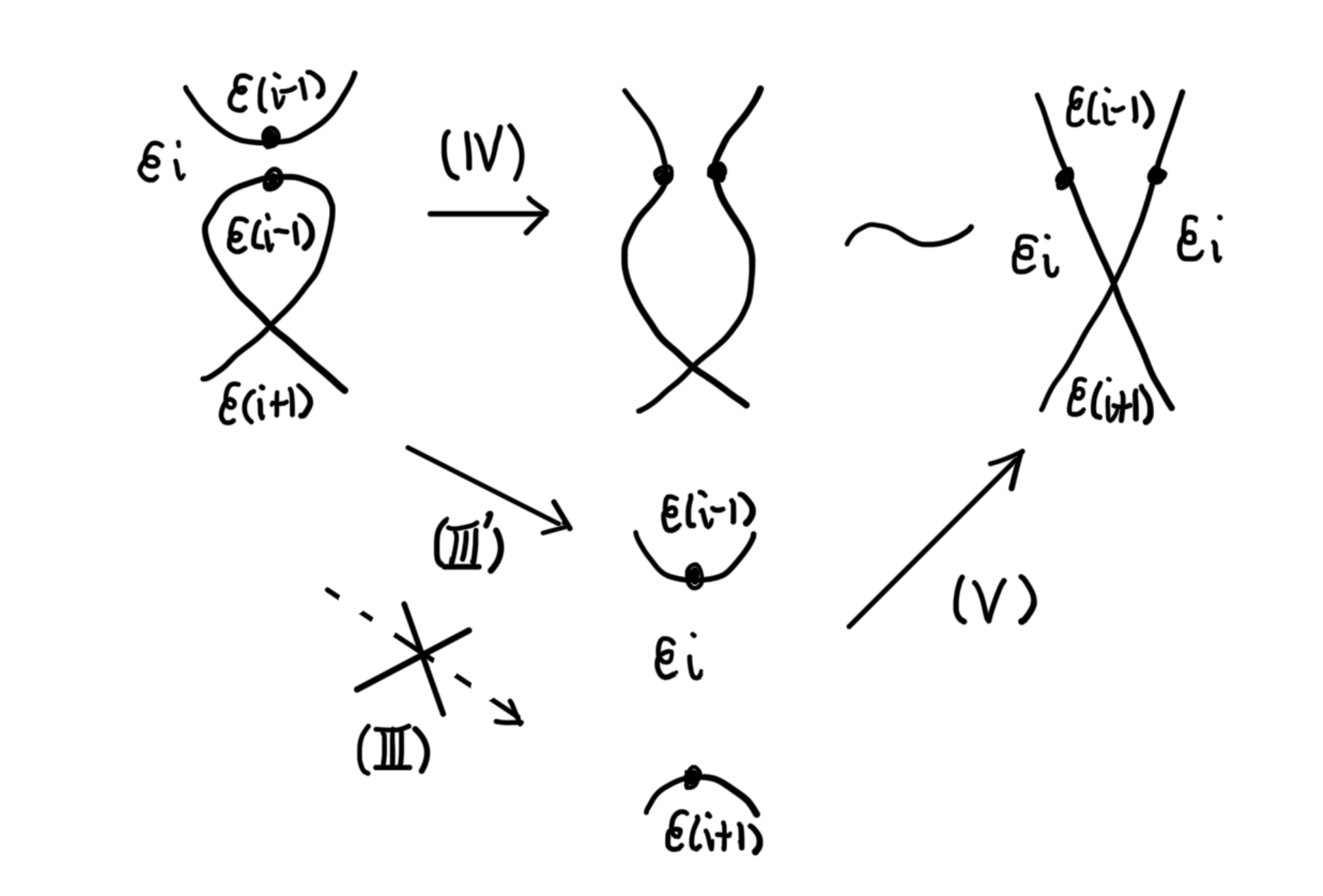}
\caption{If we have arcs where both deformations III' and IV are applicable, then the resulting dotted diagrams need a deformation V to be deformed locally to the same dotted diagram, where we omit the orientations of arcs. We remark that by the condition for labels of regions, a deformation III is not applicable. }
\label{B-fig14}
\end{figure}

\begin{figure}[ht]
\centering
\includegraphics*[height=3.5cm]{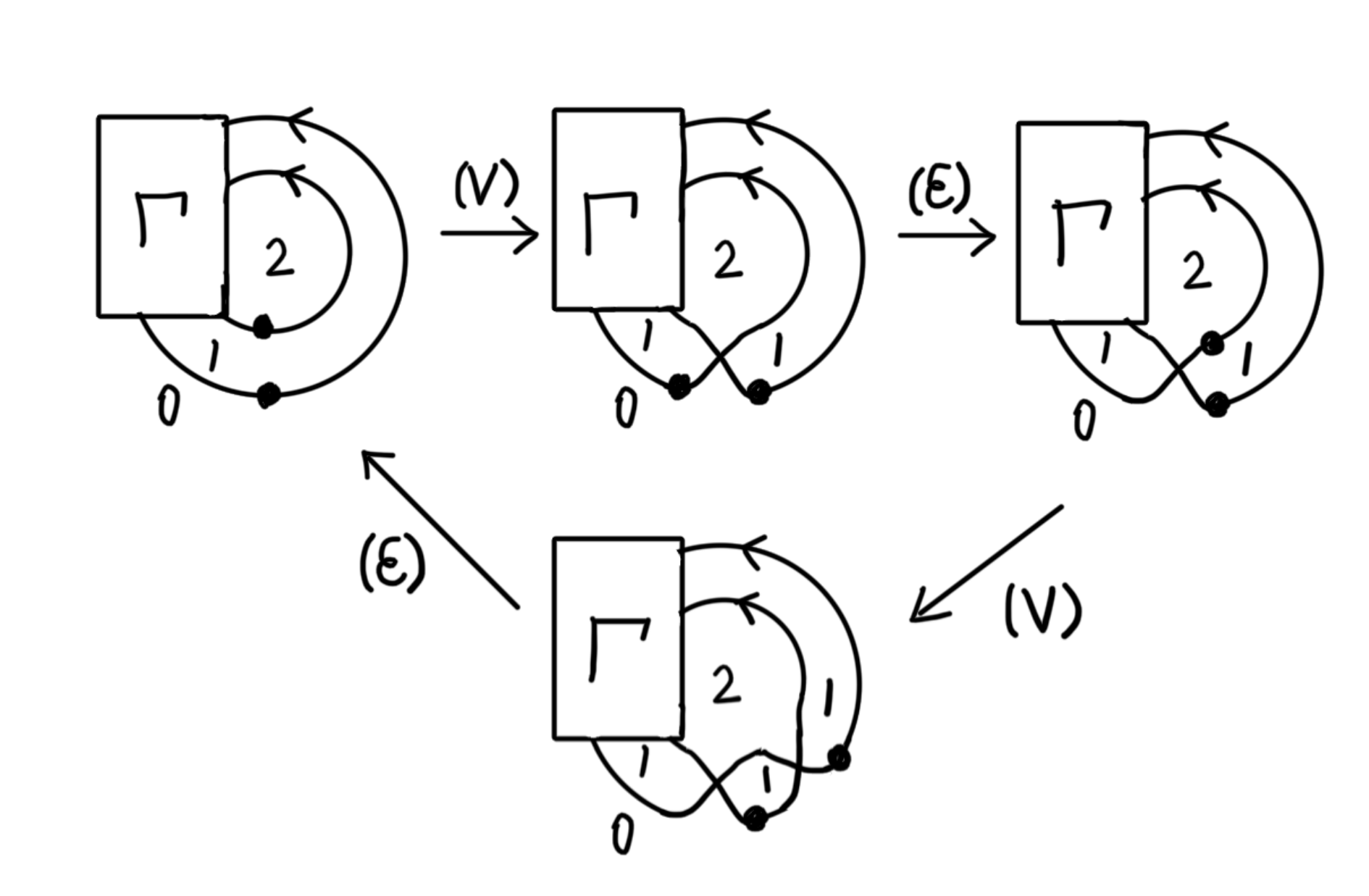}
\caption{There exist a dotted diagram $\Gamma$ admitting a sequence of deformations V and $\mathcal{E}$ whose result is $\Gamma$ itself.}
\label{C-fig4}
\end{figure}

\begin{lemma}\label{rem915}
 In the proof of Theorem \ref{prop3-5}, (1) implies the uniqueness of the reduced diagram up to local moves $\mathcal{E}$ and deformations I. 
\end{lemma}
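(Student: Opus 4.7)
The plan is to deduce the uniqueness claim of Theorem \ref{prop3-5} from the confluence statement (1) together with the observation that a reduced graph admits no further deformations I--IV and that local moves $\mathcal{E}$ preserve this property.

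Given two reduced graphs $\Gamma'_1$ and $\Gamma'_2$ of $\Gamma$ whose defining sequences satisfy the hypothesis of Theorem \ref{prop3-5}, I would first apply (1) with $\Gamma_1:=\Gamma'_1$ and $\Gamma_2:=\Gamma'_2$ to obtain a dotted graph $\Gamma^*$ together with finite sequences $\sigma_i$ ($i=1,2$) of deformations I--IV and local moves $\mathcal{E}$ taking $\Gamma'_i$ to $\Gamma^*$, none of which contains a deformation IV applied between the arc used in a local move $\mathcal{E}$ and an arc of its overlapping region.

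The key step is then to show that each $\sigma_i$ consists only of local moves $\mathcal{E}$ and deformations I. Because $\Gamma'_i$ is reduced, no deformation II, III, or IV is applicable at the very first step, and no arc of a reduced graph carries several dots, so the first move of $\sigma_i$ is necessarily a local move $\mathcal{E}$. By definition, $\mathcal{E}$ is an ambient isotopy of a disk that ignores overlapping regions and is forbidden from creating loop components, so it preserves the number of dots on each arc, the set of crossings, the topological type of the circle and loop components, and the rotation-number labels of the regions of the underlying graph $\Gamma_0$. Consequently the dotted graph obtained after any $\mathcal{E}$-move is again reduced, and an induction on the length of $\sigma_i$ forces every step in $\sigma_i$ to be a local move $\mathcal{E}$ (or, harmlessly, a deformation I if several dots ever collect on an arc along the way).

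Since $\mathcal{E}$ is reversible, being an ambient isotopy, concatenating $\sigma_1$ with the reverse of $\sigma_2$ produces a finite chain of local moves $\mathcal{E}$ and deformations I relating $\Gamma'_1$ and $\Gamma'_2$, which is precisely the uniqueness claim. The main obstacle I anticipate is the rigorous verification of the preservation property of $\mathcal{E}$ used above, namely that an ambient isotopy ignoring overlapping regions and not creating loop components cannot newly enable any of II, III, or IV; this amounts to a careful case analysis against the configurations in Figure \ref{Fig3}, combined with the invariance of dots, crossings, and rotation-number labels of regions of $\Gamma_0$ under such an isotopy.
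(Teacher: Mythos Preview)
Your overall strategy coincides with the paper's: reduce the uniqueness to the claim that a local move $\mathcal{E}$ neither enables nor disables deformations II, III, IV, so that from a reduced graph only $\mathcal{E}$ (and the deformations I it may induce) can be applied, and then conclude by joining the two reduced graphs through the common target provided by (1).

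The gap is in your justification of that key claim. You assert that $\mathcal{E}$ ``preserves the number of dots on each arc'' and ``the set of crossings'', and then use these invariants to argue that II, III, IV cannot become applicable. Neither invariant survives $\mathcal{E}$ in the presence of overlapping regions: since $\mathcal{E}$ is an isotopy that \emph{ignores} the overlapping arcs $R$, the moved arc can pass through crossings with $R$, so crossings with $R$ may be created or destroyed and a dot may migrate onto what was previously a different arc (the paper states this explicitly). You implicitly concede the first point by allowing deformations I ``if several dots ever collect on an arc along the way'', but this undercuts the invariance argument you gave two sentences earlier. Likewise, once crossings with $R$ can change, an argument based only on ``topological type of circle/loop components'' and ``labels of regions of $\Gamma_0$'' is not enough to rule out a new deformation IV, whose applicability depends on the \emph{total} labels of the regions between the arcs (including contributions from overlapping regions), not just on the labels of $\Gamma_0$.

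The paper's proof avoids these invariants and instead checks the label conditions directly: it shows that if a deformation IV is applicable between $\alpha$ and $\alpha''$ then the corresponding IV between the moved arc $\alpha'$ and $\alpha''$ is also applicable (and conversely), that $\mathcal{E}$ does not create circle components and by hypothesis does not create loop components, and that the label conditions for II/III on existing components are unaffected. That case analysis is exactly the ``main obstacle'' you anticipate; carrying it out via label conditions rather than via the invariants you listed is what makes the argument go through.
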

\begin{proof}

It suffices to show that the possibilities of application of deformations II--IV are the same before and after the application of a local move $\mathcal{E}$, where we do not apply deformations IV between the arc used in the local move $\mathcal{E}$ and an arc of its overlapping layer. 
Let $\Gamma$ be a dotted diagram and let $\Gamma'$ be the dotted diagram obtained from $\Gamma$ by applying a local move $\mathcal{E}$. 
We denote the local move $\mathcal{E}$ by $\mathcal{R}$, 
and we denote the concerning arcs before and after the application by $\alpha$ and $\alpha'$, respectively.  
Further, we denote by $R_0$ the block with the label $\epsilon i$ in Figure \ref{Fig3} (I) (with/without dots) in Definition \ref{def3-2}, and we denote by $X_1, \ldots, X_m$ the overlapping layers such that each $X_i$ $(i=1,\ldots,m)$ has the sign $\epsilon$, and we put $R=R_0\cap (X_1 \cup \ldots \cup X_m)$.  

By seeing the labels of regions, we see that 
 if a deformation IV is applicable to the arc $\alpha$ (respectively $\alpha'$) and some arc $\alpha''$,  then a corresponding deformation IV between the arc $\alpha'$ and $\alpha''$ (respectively $\alpha$ and $\alpha''$) is also applicable. 
And if $\mathrm{Int}(R)$ contains a middle block applicable of a deformation IV, denoted by $D$, we can also apply a deformation IV to $D$ after the application of $\mathcal{R}$. 
Thus, 
$\Gamma'$ (respectively $\Gamma$) does not admit a deformation IV that is not induced from one of those applicable to $\Gamma$ (respectively $\Gamma'$). 
 
Further, $\mathcal{R}$ does not create a circle component. And by assumption, $\mathcal{R}$ does not create a loop component. 
Thus, $\mathcal{R}$ does not create a circle/loop component, so $\Gamma'$ does not admit deformations II/III that are not induced from those applicable to $\Gamma$. 
And if the closure of $R$ intersects a circle/loop component applicable of a deformation II/III, denoted by $C$, we can also apply a deformation II/III to $C$ after the application of $\mathcal{R}$. 
Thus, 
$\Gamma'$ (respectively $\Gamma$) does not admit a deformation II/III that is not induced from one of those applicable to $\Gamma$ (respectively $\Gamma'$).

Thus, local moves $\mathcal{E}$ do not effect deformations II, III  and IV.  
We remark that local moves $\mathcal{E}$ might change arcs with dots to one arc with dots (or move a dot into another arc with a dot). Hence 
we see that (1) in the proof of Proposition \ref{prop3-5} implies the uniqueness of the reduced diagram up to local moves $\mathcal{E}$ and deformations I. 
\end{proof}

\begin{lemma}\label{lem913}
There do not exist arcs where both deformations III and IV are applicable, under the assumption that we do not apply a deformation IV between a loop component $C$ applicable of a deformation III and an arc of an overlapping layer of $C$. 
\end{lemma}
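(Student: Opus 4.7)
The plan is to argue by contradiction. Suppose arcs $\alpha, \beta$ admit a deformation IV and at least one of them, say $\alpha$, is an arc of a loop component $C$ to which deformation III is applicable; let $D$ denote the embedded disk bounded by $C$, with label $\epsilon i$ and outer adjacent region of label $\epsilon(i-1)$ for some $i \geq 1$. The excluded hypothesis says that it is not the case that exactly one of $\alpha, \beta$ is an arc of $C$ and the other is an arc of an overlapping region of $C$. The strategy is case analysis on whether $\beta$ is also an arc of $C$ or belongs to a separate component of $\Gamma$, and on the location of the middle region of the hypothetical IV.

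In the first case, both $\alpha$ and $\beta$ are arcs of $C$. Because $C$ is an immersed oriented circle whose coherent boundary orientation has $D$ lying consistently on one side, both arcs, viewed inside the local disk of IV (which contains no crossing of $\Gamma$), appear as disjoint segments bounding a common middle region that is a subregion of either $D$ (label $\epsilon i$) or the outer adjacent region (label $\epsilon(i-1)$, requiring $i\geq 2$). I will track the orientations forced on $\alpha$ and $\beta$ by $C$'s coherent boundary orientation and show that the induced-orientation condition of Figure \ref{Fig3}~(IV), which is the band-surgery compatibility needed so that IV yields a coherently oriented dotted graph, cannot be simultaneously satisfied: the middle region lies on the same side (interior or exterior of $D$) of both $\alpha$ and $\beta$, forcing an orientation pattern that is incompatible with IV.

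In the second case, $\alpha$ is an arc of $C$ but $\beta$ belongs to some other component $L$ of $\Gamma$. The middle region of IV has label $\epsilon j \neq 0$ and is bounded locally by both $\alpha$ and $\beta$. The two sides of $\alpha$ have labels $\epsilon i$ and $\epsilon(i-1)$, so the middle region lies on one of these two sides, and $\beta$ bounds the same middle region from its other side. Applying Claim \ref{rem2-6} to the overlapping structure, I will analyze the contributions of $L$ to the labels near the middle region and argue that, for the labels to be consistent while keeping the middle region label $\epsilon j \neq 0$ with the same sign of $\epsilon$ as on $C$'s interior side, $L$ must contribute $\pm\epsilon$ to the region of $C$ on the middle region's side. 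By Definition \ref{def2-1}, this is precisely the condition making $L$ part of an overlapping region of $C$, so $\beta$ is an arc of an overlapping region of $C$, contradicting the excluded hypothesis.

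The main obstacle is Case 1: rigorously verifying that the coherent boundary orientation of a loop component is incompatible with the induced-orientation condition of deformation IV for every pair of arcs of $C$ and every admissible placement of the middle region. The argument amounts to a careful local check inside the disk of the IV move, tracking how the orientations of $\alpha$ and $\beta$ are determined by the orientation of $C$ around $D$ and comparing them against the pattern demanded by Figure \ref{Fig3}~(IV); the obstruction is essentially a topological one coming from the fact that the boundary of a disk has the disk consistently on one side of its coherent orientation.
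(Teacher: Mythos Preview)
The paper's own proof is a single line: ``By seeing the labels of the regions, we have the required result. See Figure~\ref{B-fig14}.'' The intended argument is a direct label incompatibility between the conditions for III and IV across the arc $\alpha\subset C$, and the figure depicts only the configuration where the second arc $\beta$ is external to $C$. Your plan is far more elaborate, and Case~1 of it contains a real error.

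In Case~1 you assert that when both $\alpha$ and $\beta$ lie on the loop component $C$, the coherent boundary orientation of $C$ is \emph{incompatible} with the induced-orientation condition of deformation~IV. This is backwards. The orientation condition for IV (orientation-preserving band surgery) is exactly that the middle region lies on the \emph{same} orientation-side of both arcs. Since $C$ is coherently oriented as the boundary of the embedded disk $D$, the disk $D$ lies on one fixed orientation-side of every arc of $C$; hence for any two arcs $\alpha,\beta\subset C$ with middle region inside $D$, the induced-orientation requirement of IV is \emph{satisfied}, not violated. Your claimed topological obstruction does not exist.

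Your Case~2 is closer in spirit to the paper but more roundabout than necessary, and your sketch does not cleanly handle the sub-case where $\beta$ lies outside $D$. The direct label computation the paper has in mind is this: for IV the arc $\alpha$ must be oriented so that the middle region (of label $\epsilon' j$) is on its ``$+\epsilon'$ side'', so crossing $\alpha$ from the middle region to its other side changes the label by $-\epsilon'$. By the III condition on $C$, crossing $\alpha$ from inside $D$ (label $\epsilon i$) to outside (label $\epsilon(i-1)$) changes the label by $-\epsilon$. If the middle region were on the exterior side of $\alpha$, these two constraints force $\epsilon'=-\epsilon$ and $\epsilon' j=\epsilon(i-1)$, whence $j=1-i\le 0$, contradicting $j\ge 1$. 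So the middle region must lie inside $D$, which forces $\beta$ to be either on $C$ or an arc of an overlapping region of $C$, the latter excluded by hypothesis. This is the whole content of ``by seeing the labels of the regions''.
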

\begin{proof}
By seeing the labels of the regions, we have the required result. 
See Figure \ref{B-fig14}. 
\end{proof}

\begin{lemma}\label{lem912}
When we have a circle/loop component $C$ applicable a deformation II/III  such that the interior of the disk with the boundary $C$ contains another circle/loop component $C'$ applicable of a deformation II/III, the result of the deformations II/III to $C$ and then $C'$ is the same with that of the deformations II/III to $C'$ and then $C$, up to local moves $\mathcal{E}$. 
\end{lemma}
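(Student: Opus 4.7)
The plan is to reduce the commutativity assertion to three separate verifications: that the underlying graphs obtained by the two orderings agree, that the labels on every resulting region agree, and that the second deletion in each ordering is actually applicable once the first has been performed. Any residual discrepancy between the two orderings will then be shown to be absorbable by local moves $\mathcal{E}$.

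First I would observe that $C$ and $C'$ are disjoint as subsets of $\mathbb{R}^2$: by hypothesis $C'$ lies in the interior of the open disk bounded by $C$, while $C$ itself forms the boundary of that disk. A deformation II on a circle component only deletes the circle, and a deformation III on a loop component only modifies $\Gamma$ inside an arbitrarily small neighborhood of the component, namely by replacing the two adjacent arcs at the loop's crossing by a single arc carrying a dot (or not, according to whether the loop had dots). These two local surgeries can be chosen supported in disjoint neighborhoods of $C$ and $C'$, so as operations on the underlying graph they commute, and the resulting graph is the same in either order.

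Next I would track the labels. Removing an oriented circle or loop component decreases the rotation-number contribution, and therefore the label, of every region in the interior of its bounded disk by the orientation sign ($\epsilon_C$ for $C$, $\epsilon_{C'}$ for $C'$), while leaving every other region's label unchanged. Since these two shifts are supported on nested interior disks and are merely additions of constants, they commute. For the applicability of the second deletion I would appeal to the overlapping-region clause in Definition \ref{def3-2}: after deleting $C$ first, one may regard its former disk interior as contributing an overlapping region of label $\epsilon_C$ to $C'$'s configuration, which preserves the label differences relevant to $C'$'s deformation II or III and so keeps it applicable; Claim \ref{rem2-6} ensures that the required nonzero-label conditions persist. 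The symmetric argument handles the reverse order.

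The main obstacle, and the reason the equality only holds up to $\mathcal{E}$, is that when $C$ or $C'$ is a loop component the arc produced by smoothing the loop's crossing sits in a position determined only up to an ambient isotopy of its neighborhood, and this isotopy may also have to transport the resulting dot through what were overlapping regions of the other component. Both of these modifications are precisely local moves $\mathcal{E}$ in the sense of Definition \ref{def3-2}. Composing them for $C$ and for $C'$ expresses the difference between the two orderings as a finite sequence of local moves $\mathcal{E}$, yielding the claim.
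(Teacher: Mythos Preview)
Your proof is correct and considerably more explicit than the paper's. The paper's argument is essentially a one-liner: since a deformation II/III is applicable to $C$, all total labels inside the disk $D$ bounded by $C$ are of the same sign, and hence $C$ and $C'$ are \emph{coherently oriented} as concentric circles; from this the commutativity (up to $\mathcal{E}$) is asserted directly. Your decomposition into (i) commutativity of the underlying graph surgeries, (ii) commutativity of the label shifts, and (iii) persistence of applicability, spells out what the paper leaves implicit.

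One point worth sharpening in your applicability step: the sentence ``regard its former disk interior as contributing an overlapping region of label $\epsilon_C$'' is only a valid reinterpretation once you know $\epsilon_C=\epsilon_{C'}$, which is exactly the paper's coherent-orientation observation. This equality follows from Claim~\ref{rem2-6} applied to $C$ (all total labels in $D$ share the sign $\epsilon_C$, and the label inside $D'$ has sign $\epsilon_{C'}$). The same all-same-sign fact also gives you that the region just \emph{outside} $D'$ but inside $D$ has nonzero label $\epsilon_C(i'-1)$, forcing $i'\geq 2$; this is what guarantees that after deleting $C$ the new interior label $\epsilon_C(i'-1)$ is still nonzero and $C'$ remains applicable. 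Your citation of Claim~\ref{rem2-6} is the right tool, but making this chain explicit would close the only soft spot in the argument.
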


\begin{proof}
Let $D$ and $D'$ be the disks whose boundaries are $C$ and $C'$, respectively. 
We see that the labels of regions in $D$ are all positive or all negative, and $C$ and $C'$ has a coherent orientations as concentric circles. Hence the result of the deformations II/III to $C$ and then $C'$ is the same with that of the deformations II/III to $C'$ and then $C$, up to local moves $\mathcal{E}$. 
\end{proof}

\begin{lemma}\label{lem826}
 Let  $\Gamma$ be a dotted diagram. Then, there does not exist a sequence of deformations consisting of 
infinite number of deformations IV.  
\end{lemma}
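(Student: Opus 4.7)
My plan is to construct a non-negative integer invariant $\mu(\Gamma)$ of dotted graphs which strictly decreases under every deformation IV. Since $\mathbb{Z}_{\geq 0}$ admits no infinite strictly decreasing sequence, any sequence of deformations IV starting from a given $\Gamma$ will have length at most $\mu(\Gamma)$, proving the lemma. Motivated by Claim \ref{rem2-6}, the natural choice is
\[
\mu(\Gamma) \;=\; \sum_{R} |\ell(R)|,
\]
where $R$ ranges over all regions (connected components of $\mathbb{R}^2\setminus \Gamma$) and $\ell(R)$ denotes the total label of $R$. Since $\Gamma$ has finitely many edges, the number of regions is finite, so $\mu(\Gamma)$ is a finite non-negative integer.

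The key step is to verify that $\mu(\Gamma')\leq \mu(\Gamma)-1$ whenever $\Gamma'$ is obtained from $\Gamma$ by a single deformation IV performed in a disk $D$. By the definition of a local move, outside $D$ the two graphs agree, so all regions lying entirely outside $D$ contribute equally to both sums. Inside $D$, Claim \ref{rem2-6} asserts that the total label of every sub-region contained in the overlapped middle region passes from $\epsilon j$ to $\epsilon(j-1)$ for some positive integer $j$, while sub-regions of $D$ lying outside the middle region are unaltered. As all such values $\epsilon j$ share the common sign $\epsilon$, each transition strictly decreases $|\ell|$ by exactly one; since the middle region has nonzero total label $\epsilon i$, it contributes at least one such sub-region to the sum, and hence $\mu$ decreases by at least one.

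The main obstacle is confirming that no region of $\Gamma$ is split or rearranged by the move in a way that would secretly create new contributions to $\mu$ offsetting the decrease. To address this I would use the orientation-compatibility clause built into Definition \ref{def3-2}(IV) and the fact that the local move, viewed as band surgery along an untwisted band across the middle region, replaces the two arcs inside $D$ with two arcs having the same intersections with $\partial D$; any newly bounded piece of region inside $D$ must meet $\partial D$ and inherits its label from the unchanged exterior structure, so the only nontrivial label change is the one recorded by Claim \ref{rem2-6}. Once this local bookkeeping is checked, the invariant $\mu$ gives an explicit upper bound on the length of any chain of deformations IV, completing the proof of finiteness.
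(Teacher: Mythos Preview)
Your proposed invariant $\mu(\Gamma)=\sum_{R}|\ell(R)|$ does not decrease under a deformation~IV; in fact it can strictly increase. Take $\Gamma$ to be a single embedded oriented circle carrying two dots, so the bounded disk has label~$1$ and the unbounded region label~$0$; thus $\mu(\Gamma)=1$. Choosing $D$ to be a thin strip meeting the circle in two arcs, one through each dot, a deformation~IV (with the interior of the circle as middle region) is applicable, and the band surgery splits the circle into two disjoint circles. The new graph has three regions --- two bounded disks of label~$1$ and the unbounded region of label~$0$ --- so $\mu(\Gamma')=2$. More generally, the surgery can split the former middle region into two pieces that each keep the label $\epsilon i$, while the band strip merges into the outer label-$\epsilon(i-1)$ region; the change in your region-counting sum is then $+|\epsilon i|-|\epsilon(i-1)|=+1$, or worse. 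You correctly flagged this splitting as the ``main obstacle'' and then dismissed it, but it is exactly what happens; Claim~\ref{rem2-6} only records how the label changes \emph{pointwise} inside the band and says nothing about the number of regions after the move.

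The paper's argument is entirely different and does not rely on a monotone label invariant. It represents each deformation~IV as band surgery along a core arc, and uses Lemma~\ref{lem0901} to show that all cores attached to a given dot lie on one fixed side of the arc through that dot, hence are disjoint near it. Assuming an arbitrarily long sequence of deformations~IV, a pigeonhole argument then produces arbitrarily many parallel cores between a fixed pair of dots; surgery along $m$ parallel untwisted bands creates at least $m-2$ new circle components, each carrying a dot. Since deformations~IV preserve the total number of dots, this yields a contradiction.
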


\begin{proof}
Assume that there is a sequence of deformations IV, such that the number of deformations is greater than an arbitrarily large number $n$. 
 
Let $\Gamma'$ be a dotted diagram where a deformation IV is applied, and let $\Gamma''$ be the resultant dotted diagram. Let $B$ be the band attaching to $\Gamma'$ such that $\Gamma''$ is the result of band surgery along $B$. 
We call arcs $\Gamma' \cap B$ {\it ends} of the band $B$ attaching to $\Gamma'$. 
We remark that a deformation IV with a fixed background is determined by the core of $B$, up to local moves $\mathcal{E}$. 

When we have a sequence of deformations IV from $\Gamma$ to some dotted diagram, we have a set of bands associated with the deformations IV. We assume that the bands are sufficiently thin. We denote by the same notations $p_1, p_2, \ldots$ the dots of each dotted diagram such that for each dot $p_i$, $p_i$ of the next dotted diagram is either the same $p_i$ or one of the involved dots of the deformation IV if it is applied between $p_i$ and some $p_j$; we remark that we have two choices of $(p_1, p_2, \ldots)$ for each deformation IV. 
Let $n_i$ be the number of deformations IV applied to $p_i$. Let $\nu$ be the number of dots $p_i$ with $n_i>0$. By changing the indices, we arrange that $p_1, \ldots, p_\nu$ are the dots with $n_i>0$ $(i=1, \ldots, \nu)$. Let $N(G)$ be the union of the bands, and let $a_i$ be the end of the band containing $p_i$ of $\Gamma$ $(i=1, \ldots, \nu)$. 
Then, the result of the deformations IV is the closure of $(\Gamma \cup \partial N(G))\backslash (a_1 \cup \cdots \cup a_\nu)$. 
The union of bands $N(G)$ is a regular neighborhood of a graph $G$ which consists of edges and vertices of degree one called {\it endpoints}, and vertices $v_i$ of degree $n_i+1$  such that the endpoints are dots $p_i$ of $\Gamma$ and $v_i$ is contained in the interior of $N(G)$ $(i=1, \ldots, N$); see the middle figure in Figure \ref{C-fig2}.

By local moves $\mathcal{E}$, we regard that the result of deformations IV is the result of band surgery along bands such that for each $p_i$, the associated cores are mutually disjoint and the endpoints are $n_i$ points in the neighborhood of the dot $p_i$; see the lower figure in Figure \ref{C-fig2}. We call the endpoints the endpoints {\it associated with $p_i$}. We remark that since each  deformation IV satisfies the condition of signs for overlapping layers, together with Lemma \ref{lem0901}, we see that the cores whose endpoints are associated with $p_i$ do not have intersecting points. 

Since we can take the number of deformations to be greater than an arbitrarily large number $n$, 
there are 
cores each of whose endpoints are associated with a fixed pair of dots. The cores are mutually disjoint, and in the neighborhood of the associated dot, near the endpoints, the cores are parallel by Lemma \ref{lem0901}. 
Since the bands have no twists, when we have $m$ such cores, the resultant dotted diagram has at least $m-2$ circle components more than before the deformations, and each circle component has a dot. 
Since the number $m$ can be made arbitrarily large, the resultant diagram has more dots than $\Gamma$. Since deformations IV do not change the number of dots, this is a contradiction. 
Thus a dotted diagram does not admit a sequence of arbitrarily large number of deformations IV. 
\end{proof}

\begin{lemma}\label{lem0901}
Let $\Gamma$ be a dotted diagram and let $p$ be a dot of $\Gamma$. Let $D$ be a small embedded disk around $p$ such that the arc of $\Gamma$ divides $D$ into two embedded disks $D_1$ and $D_2$. Let $\mathcal{X}$ be the set of  deformations  IV applicable between $p$ and other dots. Then, the middle blocks of deformations in $\mathcal{X}$ intersect 
with $D$ in one of the disks $D_1$ and $D_2$. 
\end{lemma}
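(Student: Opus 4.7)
My plan is to show that, for every $\mathcal{R}\in\mathcal{X}$, the side of $\alpha$ at $p$ on which the middle region of $\mathcal{R}$ lies is uniquely determined by the orientation of $\alpha$ at $p$, so it is independent of the other dot involved.

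Let $\vec{v}$ denote the orientation vector of $\alpha$ at $p$, and label $D_1, D_2$ so that $D_2$ lies to the right of $\vec{v}$. For an arbitrary $\mathcal{R}\in\mathcal{X}$ between $p$ and another dot $q$ on an arc $\beta$, I would realize $\mathcal{R}$, following the proof of Lemma \ref{lem826}, as a band surgery along an untwisted band $B$ lying in the middle region, with one short edge on $\alpha$ near $p$ and the other on $\beta$ near $q$. Near $p$, the middle region intersects $D$ exactly in the disk $D_i$ that contains the side of $B$ along $\alpha$, so it suffices to show that this side is always the one on the right of $\vec{v}$.

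The crux of the argument is to exploit the ``induced orientations'' hypothesis of Definition \ref{def3-2}(IV). I would equip $B$ with the standard orientation inherited from $\mathbb{R}^2$, so that $\partial B$ carries the induced counterclockwise orientation; after the surgery, the two long edges of $B$ become pieces of the new arcs of the resulting dotted graph and carry the orientations inherited from $\partial B$. For the resulting arcs to admit coherent orientations extending those of $\alpha$ and $\beta$ on the unaffected pieces, at each endpoint of a long edge the CCW orientation of the long edge must fit with the orientation of the adjacent arc, in the sense that one is incoming and the other outgoing at that endpoint. A short local diagram-chase shows that this matching is forced precisely when the CCW orientation of $\partial B$ traverses the short edge on $\alpha$ in the direction opposite to $\vec{v}$. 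Since a CCW boundary keeps the interior of $B$ on its left, this pins down the side of $B$ at $p$: the band lies on the right of $\vec{v}$, i.e., in $D_2$. As this conclusion uses only $\vec{v}$, it is independent of $q$ and $\beta$, so every $\mathcal{R}\in\mathcal{X}$ sends its middle region into the same $D_i$ near $p$, which is the claim.

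I expect the main obstacle to be the orientation-matching check at the two endpoints of a long edge. Although this is purely local, one must identify ``induced orientations'' with the orientation inherited from the natural orientation of the band as a $2$-disk in $\mathbb{R}^2$, and thereby rule out the would-be mirror configuration in which $B$ sits on the opposite side of $\alpha$ at $p$. Cross-checking this convention against Figure \ref{Fig3}(IV), and carefully tracking, for each of the two candidate sides, the direction in which the CCW boundary of $B$ traverses its short edge on $\alpha$, is the main content of the argument.
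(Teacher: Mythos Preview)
Your orientation argument has a genuine gap: the ``induced orientations'' condition of Definition~\ref{def3-2}(IV) does \emph{not} determine on which side of $\alpha$ the band $B$ sits. The condition that the new arcs be coherently orientable only constrains the relative orientations of $\alpha$ and $\beta$ with respect to the band (they must be ``anti-parallel'' across $B$); it does not fix the side at $p$. Concretely, take $\alpha$ horizontal and oriented left-to-right. If $B$ is attached above $\alpha$ (on the left of $\vec v$), the surgery is orientation-compatible provided $\beta$ is oriented right-to-left; if $B$ is attached below (on the right of $\vec v$), it is compatible provided $\beta$ is oriented left-to-right. In the first case the long edges inherit the CW orientation of $\partial B$, in the second the CCW one---both are perfectly coherent extensions of $\alpha,\beta$. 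Your step ``the two long edges \dots carry the orientations inherited from $\partial B$'' is thus an unjustified choice, not a consequence of the hypothesis, and the ``mirror configuration'' you hope to rule out is not excluded by orientations alone.

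What actually pins down the side is the \emph{label} condition on the middle region, and this is exactly what the paper's proof uses. The middle region must carry label $\epsilon i$ with $i\ge 1$ and the adjacent region $\epsilon(i-1)$; since the labels of $D_1$ and $D_2$ differ by exactly one across $\alpha$, this forces the middle region to be the side whose (total) label has strictly larger absolute value. The paper argues by contradiction: if one deformation in $\mathcal{X}$ had its middle region in $D_1$ and another in $D_2$, the first would force $|\text{label}(D_1)|>|\text{label}(D_2)|$ and the second the reverse (with a short extra check for overlapping regions). Your proposal never invokes the nonzero-label hypothesis, so it cannot succeed as written; the missing ingredient is precisely this label comparison.
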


\begin{proof}
Assume that we can apply a deformation IV, denoted by $\mathcal{R}_1$ (respectively $\mathcal{R}_2$), between $p$ and a dot $p_1$ (respectively $p$ and $p_2$) such that the middle block  intersects with $D$ in $D_1$ (respectively $D_2$). 
Since $\mathcal{R}_1$ is applicable, we see that when $D_1$ is not overlapped, the label of $D_1$ (respectively $D_2$) is $\epsilon i$ (respectively $\epsilon (i-1)$), where $\epsilon \in \{+1, -1\}$ and $i$ is a positive integer: 
the absolute value of the label of $D_1$ is greater than that of $D_2$. 
Further, when the middle block containing $D_1$ is overlapped by some layers, the label of each region in the middle block is $\epsilon j$ for some $j \geq i$; so the case does not occur that the absolute value of the label of a region in $D_1$ is smaller than that of $D_2$. 

By the same argument, since $\mathcal{R}_2$ is applicable, we see that the absolute value of the label of a region in $D_2$ is greater than that of $D_1$, which is a contradiction. 
Hence we have the required result. 
\end{proof}

\begin{lemma}\label{lem-822}
Let $P$ be a lattice diagram and 
let $\Gamma$ be its associated dotted diagram. 
When we can apply a deformation IVa that satisfies the condition (a1) given in Definition \ref{def3-3}, then 
there exists a normal/reversed transformation of $P$ that realizes the deformation IVa. 
\end{lemma}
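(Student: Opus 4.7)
The plan is to locate, at the crossing $c$ of $\Gamma$ where the deformation IVa applies, a rectangle $R$ in $\mathbb{R}^2$ whose diagonal vertices are either a pair of initial vertices or a pair of terminal vertices of $P$, and to show that the normal (respectively reversed) transformation of $P$ along $R$ produces a lattice polytope whose associated dotted graph agrees with the result of the deformation IVa followed by the deformation III that the ``good order'' convention of Definition \ref{def3-3} immediately appends to it (cf.\ Figure \ref{F-fig1}).

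First I would unpack the local data around $c$. Let $\alpha_1,\alpha_2$ be the two adjacent arcs at $c$ to which the deformation IVa is applied, and let $p_1,p_2$ be the dots on $\alpha_1,\alpha_2$ that serve as endpoints of the band core. Since $c$ comes from a transverse intersection of an $x$-direction edge and a $y$-direction edge of $P$, the portions of $\alpha_1,\alpha_2$ running from $p_i$ toward $c$ are realised in $P$ by a zigzag of alternating $x$- and $y$-edges. Depending on whether the segment from $p_i$ to $c$ begins with an $x$-edge or a $y$-edge, I would split into two cases: either the initial vertices $v_1,v_2$ of $P$ corresponding to $p_1,p_2$ already form a diagonal pair of a rectangle $R$ one of whose remaining corners is at $c$ (the ``normal'' case), or the analogous property holds for the terminal vertices $w_1,w_2$ of $P$ closest to $c$ along $\alpha_1,\alpha_2$ (the ``reversed'' case). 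Condition (a1) guarantees that exactly one of these cases occurs, because the loop component produced by the deformation must bound an embedded disk at $c$.

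Next I would carry out the transformation. In the normal case, apply the normal transformation of $P$ along $R(v_1,v_2)$: this replaces $\{v_1,v_2\}$ in $\mathrm{Ver}_0(P)$ by $\{\tilde v_1,\tilde v_2\}$, and near $c$ the boundary gets reconnected in exactly the pattern of Figure \ref{F-fig1}. In particular a small loop component appears at $c$ whose bounding disk has label differing by one from the surrounding label, so that a single subsequent deformation III removes it and yields precisely the dotted graph associated with the transformed polytope. In the reversed case I would run the symmetric argument with the reversed transformation along the rectangle with diagonal $\{w_1,w_2\}\subset \mathrm{Ver}_1(P)$. The label/orientation hypotheses that make the deformation IVa legal (nonzero middle label $\epsilon i$, coherently induced orientations on $\alpha_1,\alpha_2$) translate directly into the admissibility of $R$ as a diagonal of initial, respectively terminal, vertices, and they force the absolute area of $R$ to drop from $|\epsilon i|$ to $|\epsilon (i-1)|$, consistent with Proposition \ref{prop919}.

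The main obstacle is the orientation and overlapping-region bookkeeping: one must check that, regardless of how the middle region is overlapped by regions with label $\epsilon$, the rectangle $R$ is well defined and its interior matches the middle region in the combinatorial sense used in Definition \ref{def3-2}, and that the dichotomy normal/reversed is forced unambiguously by whether the arcs emerging from $p_i$ (resp.\ $w_i$) toward $c$ are $x$- or $y$-oriented. Once these local checks are carried out, the desired realisation reduces to the direct comparison of the two local pictures drawn in Figure \ref{F-fig1}.
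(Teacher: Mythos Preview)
Your plan identifies the right target (a single rectangle with the crossing $c$ as one of its corners, whose diagonal is a pair of initial or a pair of terminal vertices), but the way you propose to locate that rectangle does not work. In the ``normal'' case you take the diagonal to be the dots $p_1,p_2$ themselves. These are arbitrary initial vertices on $\alpha_1,\alpha_2$, possibly several edges away from $c$ along the zigzag you describe; then $R(p_1,p_2)$ has no reason to have $c$ as a corner, and the transformation along it will not reproduce the local picture of Figure~\ref{F-fig1}. Your case split, based on whether the segment from $p_i$ toward $c$ begins with an $x$- or a $y$-edge, is likewise not the right invariant: that depends on where $p_i$ happens to sit on its arc, not on the structure at the crossing.

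The paper's argument bypasses the dots entirely and is purely local at $c$. Let $e,e'$ be the $x$- and $y$-direction edges of $P$ that intersect at $c$, and let $v,v'$ be their endpoints lying on the boundary of the closure of the middle region $D$ (equivalently, the first vertices of $P$ one meets when walking from $c$ into $\alpha_1$ and into $\alpha_2$). Since $v$ lies on the $x$-edge through $c$ and $v'$ on the $y$-edge through $c$, the rectangle $R(v,v')$ automatically has $c$ as one corner. The orientation compatibility required for a deformation~IV (the two arcs must admit induced coherent orientations across the band) forces $v$ and $v'$ to be both initial or both terminal, and this is what yields the normal/reversed dichotomy. The transformation of $P$ along $R(v,v')$ is then exactly the one drawn in Figure~\ref{F-fig1}. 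So the fix to your outline is simple: replace $p_1,p_2$ (and your $w_1,w_2$) by the endpoints of the crossing edges on the $D$-side, and read off the case from the orientation of $e,e'$ at $c$ rather than from the zigzag starting at the dots.
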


\begin{proof}
Suppose that we can apply to $\Gamma$ a deformation IV in question. We recall that the condition (a1) is that the arcs involved in the deformation are adjacent arcs of a crossing.

We denote by $D$ the union of the regions of $P$ associated with the middle block. 
Then, the pair of dots involved in the deformation are on adjacent arcs connected by a crossing such that around the crossing, the other arcs are in the complement of $D$. Let $e$ and $e'$ be the pair of edges of $P$ in the $x$-direction and $y$-direction that form the crossing, and let $v$ and $v'$ be the pair of vertices that are endpoints of $e$ and $e'$ contained in the boundary of the closure of $D$. Let $R$ be the rectangle whose pair of diagonal vertices are $v$ and $v'$. We can see that $v$ and $v'$ are both initial vertices or both terminal vertices of $P$. The transformation of $P$ along $R$ is the required transformation: it realizes the deformation IV, and it is a normal (respectively reversed) transformation if $v$ and $v'$ are the initial (respectively terminal) vertices. 
\end{proof}

\begin{lemma}\label{lem-821}
Let $P$ be a lattice diagram. 
When $P$ admits a dissolution consisting of normal transformations and reversed transformations realizing minimal area, 
then there is a dissolution consisting only of normal transformations realizing minimal area. 
\end{lemma}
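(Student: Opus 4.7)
The plan is to exploit the fact that each rectangle operation $t(\cdot\,;R)$ is its own inverse, i.e.\ $t(t(\Delta;R);R)=\Delta$, which allows us to ``time-reverse'' the reversed transformations appearing in the given mixed sequence. Write the mixed sequence as $P=P_0\to P_1\to\cdots\to P_k$, where each step is either a normal transformation along a rectangle $R_i$ (modifying $\mathrm{Ver}_0$) or a reversed transformation along $R_i$ (modifying $\mathrm{Ver}_1$). Because the mixed sequence realizes a transformation of $P$ with minimal area (in the sense arising in Proposition \ref{prop919} and the proof of Theorem \ref{thm3-7}), the endpoint $P_k$ has $\mathrm{Ver}_0(P_k)=\mathrm{Ver}_1(P_k)$; otherwise the mixed sequence could not be interpreted as a transformation from $\mathrm{Ver}_0(P)$ to $\mathrm{Ver}_1(P)$.

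Then I would split the mixed sequence into two subsequences. The normal subsequence, taken in its original order, yields a chain of rectangle operations $\mathrm{Ver}_0(P)\to\mathrm{Ver}_0(P_k)$, since reversed steps leave $\mathrm{Ver}_0$ unchanged; the reversed subsequence, taken in its original order, yields a chain $\mathrm{Ver}_1(P)\to\mathrm{Ver}_1(P_k)$, since normal steps leave $\mathrm{Ver}_1$ unchanged. Using $\mathrm{Ver}_0(P_k)=\mathrm{Ver}_1(P_k)$, I would concatenate the normal subsequence with the reversed subsequence traversed backward to obtain a chain
\[
\mathrm{Ver}_0(P)\ \to\ \mathrm{Ver}_0(P_k)=\mathrm{Ver}_1(P_k)\ \to\ \mathrm{Ver}_1(P).
\]
Because each $R_i$ is its own inverse under $t$, the time-reversed reversed subsequence is again a sequence of rectangle operations along the same $R_i$'s. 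Interpreting each step of the concatenated chain as a normal transformation of a lattice polytope whose terminal vertices are fixed at $\mathrm{Ver}_1(P)$ yields an all-normal transformation of $P$. The total area is preserved, being $\sum_{i=1}^k|\mathrm{Area}(R_i)|$ in both cases, so the new transformation also realizes minimal area.

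The main obstacle will be verifying validity of the concatenated sequence: at each step, the rectangle's diagonal vertices must lie in the current $\mathrm{Ver}_0$ state. For the normal portion this is automatic, since the $i$-th normal step originally had $R_i$-diagonals in $\mathrm{Ver}_0(P_{i-1})$ and no reversed step in between alters $\mathrm{Ver}_0$. For the time-reversed reversed portion, one tracks indices: immediately before applying the inverse of the original reversed step at index $j$, the current $\mathrm{Ver}_0$ should equal $\mathrm{Ver}_1(P_j)$, and the ``other'' diagonal pair of $R_j$ (the one produced by the original reversed step) sits precisely there. This follows by induction, using the fact that normal steps lying between two consecutive reversed steps do not change $\mathrm{Ver}_1$, so the intermediate values $\mathrm{Ver}_1(P_j)$ line up correctly with the successive $\mathrm{Ver}_0$ states encountered while unwinding. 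Once validity is established, the area equality gives the conclusion.
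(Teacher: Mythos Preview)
Your proposal is correct and follows essentially the same approach as the paper: both arguments use the involution $t(t(\Delta;R);R)=\Delta$, the fact that normal moves touch only $\mathrm{Ver}_0$ while reversed moves touch only $\mathrm{Ver}_1$, and the conversion of each reversed move $\mathcal{L}$ along $R$ into the normal move $\bar{\mathcal{L}}$ along $R$ traversed in the opposite direction. The only difference is the particular reordering: you place all normal blocks first and then all $\bar{\mathcal{L}}$ blocks in reverse, whereas the paper interleaves them as $(\mathcal{R}^1)(\mathcal{R}^2)(\bar{\mathcal{L}}^1)(\mathcal{R}^3)(\bar{\mathcal{L}}^2)\cdots(\bar{\mathcal{L}}^{r-1})(\bar{\mathcal{L}}^r)$; your ordering is arguably cleaner and your validity check by induction is more explicit than what the paper writes out.
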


\begin{proof}
We denote the sequence of transformations by  
\[
(\mathcal{R}^1_1 \mathcal{R}^1_2 \cdots \mathcal{R}^1_{m_1}) (\mathcal{L}^1_1 \mathcal{L}^1_2 \cdots \mathcal{L}^1_{n_1})
(\mathcal{R}^2_1 \mathcal{R}^2_2 \ldots \mathcal{R}^2_{m_2}) \cdots (\mathcal{L}^r_1 \mathcal{L}^r_2 \ldots, \mathcal{L}^r_{n_r}), 
\]
where $\mathcal{R}_s^t$ (respectively $\mathcal{L}_s^t$) is a normal  transformation (respectively a reversed transformation) along a rectangle, and we apply transformations from left to right. 
For a reversed transformation $\mathcal{L}$ along a rectangle $R$ from a lattice diagram $P_1$ to a lattice diagram $P_2$, we denote by $\mathcal{\bar{L}}$ the normal transformation along $R$ from $P_2$ to $P_1$. Then, 
the sequence of transformations 
\begin{eqnarray*}&&
(\mathcal{R}^1_1 \mathcal{R}^1_2 \cdots \mathcal{R}^1_{m_1}) 
(\mathcal{R}^2_1 \mathcal{R}^2_2 \cdots \mathcal{R}^2_{m_2}) (\mathcal{\bar{L}}^1_{n_1} \mathcal{\bar{L}}^1_{n_1-1} \cdots \mathcal{\bar{L}}^1_{1})\\
&&
\ \cdot (\mathcal{R}^3_1 \mathcal{R}^3_2 \cdots \mathcal{R}^3_{m_3})
(\mathcal{\bar{L}}^2_{n_2}\mathcal{\bar{L}}^2_{n_2-1}\cdots \mathcal{\bar{L}}^2_{1}) \cdots\\
&&
\ \cdot (\mathcal{R}^r_1 \mathcal{R}^r_2 \cdots \mathcal{R}^r_{m_r})
(\mathcal{\bar{L}}^{r-1}_{n_{r-1}}\mathcal{\bar{L}}^{r-1}_{n_{r-1}-1}\cdots \mathcal{\bar{L}}^{r-1}_{1})
(\mathcal{\bar{L}}^r_{n_r} \mathcal{\bar{L}}^r_{n_r-1} \cdots \mathcal{\bar{L}}^r_{1})
\end{eqnarray*}
is the required dissolution of $P$. 
\end{proof}

\section*{Acknowledgements}
The author would like to thank the referee for his/her kind and very helpful comments. The author was partially supported by JST FOREST Program, Grant Number JPMJFR202U.

\end{document}